 \newtheorem{theorem}{Theorem}[section]
 \newtheorem{proposition}[theorem]{Proposition}
 \newtheorem{lemma}[theorem]{Lemma}
 \theoremstyle{definition}
 \theoremstyle{remark}
 \newtheorem{remark}[theorem]{Remark}
\numberwithin{equation}{section}
\renewcommand{\Re}{\operatorname{Re}}
\newcommand{\rg}{\operatorname{rg}}
\newcommand{\R}{\mathbb{R}}
\newcommand{\la}{\lambda}
\newcommand{\mb}[1]{\textbf{#1}}
\newcommand{\bm}[1]{{\bf #1}}
\newcommand{\mc}[1]{\mathcal{#1}}
\newcommand{\B}{\mathbb{B}}
\newcommand{\inner}[1]{\langle #1 \rangle}
\newcommand{\norm}[1]{\| #1 \|}
\newcommand{\hsob}[1]{\dot{H}^{#1}(\R^n)}
\title[]{Global-in-space stability of singularity formation for Yang-Mills fields in higher dimensions}
\author{Irfan Glogi\'c}
\address{Faculty of Mathematics, University of Vienna, Oskar-Morgenstern-Platz 1, 1090 Vienna, Austria}
\email{irfan.glogic@univie.ac.at}
\thanks{The author acknowledges support by the Austrian Science Fund FWF, Projects P 30076 and P 34378.}
\begin{document}

\begin{abstract}
	We continue our work \cite{Glo22a} on the analysis of spatially global stability of self-similar blowup profiles for semilinear wave equations in the radial case. In this paper we study the Yang-Mills equations in $(1+d)$-dimensional Minkowski space. For $d \geq 5$, which is the energy supercritical case, we consider an explicitly known equivariant self-similar blowup solution and establish its nonlinear  global-in-space asymptotic stability under small equivariant perturbations. The size of the initial data is measured in terms of, in a certain sense, optimal Sobolev norm above scaling.
	This result complements the existing stability results in odd dimensions, while for even dimensions it is new.  
\end{abstract}

\maketitle
\section{Introduction}
\noindent 
The objects we consider are Yang-Mills fields on the trivial bundle $\R^{1+d} \times SO(d,\R)$. That is to say, we study the $1$-forms\footnote{Throughout the paper, we use the Einstein summation convention, where Greek indices run from $0$ to $d$, while Latin indices go from $1$ to $d$. Also, the indices are raised and lowered with respect to the Minkowski metric $\eta_{\alpha\beta}:=\text{diag} (-1,1,\dots,1)$.}
\begin{equation}
	A=A_\alpha(x) \, dx^{\alpha},  \quad A_\alpha: \R^{1+d} \rightarrow \mathfrak{so}(d,\R),
\end{equation}
that extremize the Yang-Mills action functional
\begin{equation*}
	\mathcal{S}[A]:=\int_{\R^{1+d}} \text{tr} (F_{\alpha\beta}F^{\alpha\beta}),
\end{equation*}
where
$
	F_{\alpha\beta}= \partial_\alpha A_\beta - \partial_\beta A_\alpha + [A_\alpha,A_\beta]
$
 is the curvature 2-form. Here, $\mathfrak{so}(d,\R)$ stands for the Lie algebra of real skew symmetric $d \times d$ matrices endowed with the standard commutator bracket. To solve this variational problem, we derive the associated Euler-Lagrange equations
 \begin{equation}\label{Eq:YM_general}
 	\mathbf{D}^\alpha F_{\alpha\beta}=0, \quad \beta=0,\dots,d,
 \end{equation}
where $	{\bf D}_\alpha:=\partial_\alpha + [A_\alpha,\cdot]$ is the covariant derivative associated with the 1-form $A$.\footnote{For notational convenience, in the rest of the paper we view 1-forms as ordered tuples of matrices.} Equations \eqref{Eq:YM_general} are called the \emph{hyperbolic Yang-Mills equations} and we refer to their solutions as \emph{Yang-Mills connections} (or \emph{fields}) on the trivial bundle $\R^{1+d} \times SO(d,\R)$.

Due to the Lorentzian nature of the Minkowski space, one interprets \eqref{Eq:YM_general} as an evolution system in the variable $x^0$, which we henceforth denote by $t$. Consequently, the initial data consist of a pair $(\tilde{A},\tilde{B})$ of $\mathfrak{so}(d,\R)$-valued 1-forms on $\R^d$. Still, this system is under-determined due to the freedom of the choice of gauge. To remove this ambiguity, we study the Cauchy problem for \eqref{Eq:YM_general} in the so-called \emph{temporal gauge}, i.e., we look for solutions for which $A_0 \equiv 0$. Accordingly, we pose the initial condition
\begin{equation}\label{Eq:YM_gen_init_cond}
	A(0,\cdot)=\tilde{A}, \quad  \partial_t A(0,\cdot)=\tilde{B},
\end{equation}
where we assume that $\tilde{A}_0=\tilde{B}_0\equiv 0$.

The system \eqref{Eq:YM_general} obeys the scaling law
\begin{equation*}
	A(t,x) \mapsto A_{\la}(t,x):=\la^{-1} A(t/\la,x/\la),
\end{equation*}
which leaves the $\dot{H}^{\frac{d}{2}-1}$-norm invariant. This leads to the usual criticality classification with respect to the energy.
Our focus in this paper is on the energy-supercritical case, $d \geq 5$, where the presence of nonlinear interactions of $A$ with itself in \eqref{Eq:YM_general} is heuristically expected to cause finite-time blowup for (at least some) large initial data. That this is indeed the case for $5 \leq d \leq 10$ was shown in 1998 by Cazenave-Shatah-Tahvildar-Zadeh  \cite{CazShaTah98}. Then, for all higher dimensions, the confirmation of this heuristic was provided in 2015 by Bizo\'n-Biernat \cite{BizBie15}, who discovered an explicit $SO(d,\R)$-equivariant self-similar solution to \eqref{Eq:YM_general}. $SO(d,\R)$-equivariant solutions represent a subclass of temporal gauge solutions that have the following form 
\begin{equation}\label{Def:Equiv_ansatz}
	A(t,x)=u(t,|x|)\sigma(x),
\end{equation}
where $\sigma : \R^d \rightarrow \mathfrak{so}(d,\R)^d$ is defined by
\begin{equation*}
	\sigma^{ij}_k(x)=\delta^j_k x^i - \delta^i_k x^j,
\end{equation*}
  and $u(t,\cdot):[0,\infty) \rightarrow \R$ is what we call the \emph{radial profile} at time $t$ of the \emph{equivariant 1-form} $A$. The significance of the ansatz \eqref{Def:Equiv_ansatz} lies in the fact that the system \eqref{Eq:YM_general} thereby reduces to a single $(d+2)$-dimensional semilinear radial wave equation for the profile $u=u(t,r)$ 
\begin{equation}\label{Eq:YM_equiv}
	 \partial_t^2 u - \partial_r^2 u - \frac{d+1}{r}\partial_r u =(d-2)u^2 (3-r^2u)
\end{equation}	 
(see \cite{Dum82} for the original derivation), with the corresponding initial condition 
\begin{equation}\label{Eq:YM_equiv_init_cond}
	u(0,r)=u_0(r), \quad \partial_tu(0,r)=u_1(r),
\end{equation}
which is obtained from the equivariant initial data
\begin{equation}\label{Def:Equiv_init_data}
	\tilde{A}(x) = u_0(|x|)\sigma(x), \quad \tilde{B}(x) = u_1(|x|)\sigma(x).
\end{equation}
The self-similar solution (to \eqref{Eq:YM_equiv}) discovered by Bizo\'n-Biernat, exists in the whole supercritical range $d \geq 5$ and is given by
\begin{equation}\label{Def:BB_sol}
	u_T(t,r):=\frac{1}{(T-t)^2}\phi\left( \frac{r}{T-t} \right), \quad \phi(\rho)=\frac{\alpha(d)}{\rho^2+\beta(d)}, \quad T>0,
\end{equation}
where 
\begin{equation}\label{Def:Alpha_Beta}
	\alpha(d)=2\left(1+\sqrt{\tfrac{d-4}{3(d-2)}} \right) \quad \text{and} \quad \beta(d)=\frac{1}{3}\big(2d-8+\sqrt{3(d-2)(d-4)}\big).
\end{equation}
Note that, via relation \eqref{Def:Equiv_ansatz}, $u_T$ yields for the system \eqref{Eq:YM_general} a self-similar temporal gauge solution 
\begin{equation}\label{Def:BB_sol_vector}
	A_T(t,x):=\frac{1}{T-t}\Phi\left(\frac{x}{T-t}\right), \quad 	\Phi(x)= \phi(|x|)\sigma(x),
\end{equation}
which blows up at the origin as $t \rightarrow T^-$.

\subsection{The main result}
To understand the role of  $u_T$ for generic evolutions of \eqref{Eq:YM_equiv}, Bizo\'n-Tabor \cite{BizTab01,Biz02} performed numerical simulations for $d=5$ with randomly chosen initial data that lead to blowup. What they observed is the following: the rate of blowup is always self-similar with the blowup profile being globally in space given by $u_T$ (i.e., $\phi$); see \cite[Figure 3]{Biz02}.  Consequently, they conjectured that for large data evolutions, $A_T$ (i.e., $\Phi$) is the universal global-in-space blowup profile for \eqref{Eq:YM_general} within the equivariant solution class; see \cite[Conjecture 1]{Biz02}. In this paper we prove this conjecture (together with its higher-dimensional analogue) for initial data that are close to $A_T$. More precisely, we show that for every $d \geq 5$ there is an open set (in a suitably chosen topology) of equivariant initial data around 
\begin{equation}\label{Eq:Init_data_equiv}
	A_1(0,\cdot) = \Phi, \quad  \partial_t A_1(0,\cdot)=\Phi + \Lambda \Phi, \quad \text{where} \quad \Lambda f(x):= x^i \partial_i f(x),
\end{equation}
such that the Cauchy evolution of \eqref{Eq:YM_general}-\eqref{Eq:YM_gen_init_cond} blows up by converging globally in space  to $A_T$ (i.e., to $\Phi$ upon dynamical self-similar rescaling) for some $T$ close to 1. The formal statement is as follows.
\begin{theorem}\label{Thm:Main}
	Let $d \geq 5$. There exist constants $0 < \varepsilon \ll 1, M \gg 1$, and $\omega>0$ such that the following holds. For every equivariant initial data 
	\begin{equation}\label{Eq:initial_data}
		(A(0,\cdot),\partial_t A(0,\cdot))=(\Phi,\Phi+ \Lambda \Phi) + (\varphi_0,\varphi_1)
	\end{equation}
	where $\varphi_0,\varphi_1$ are Schwartz functions for which
	\begin{equation}\label{Eq:Data_smallness}
		\| \varphi_0 \|_{\dot{H}^\frac{d-1}{2} \cap \dot{H}^\frac{d}{2}(\mathbb{R}^d)  }  +
		\| \varphi_1 \|_{
		\dot{H}^\frac{d-3}{2} \cap \dot{H}^{\frac{d}{2}-1}(\mathbb{R}^d) }
	  \leq \frac{\varepsilon}{M},
	\end{equation}
	there exists $T \in [1-\varepsilon,1+\varepsilon]$ and a unique classical solution $A \in C^\infty([0,T)\times \R^d)$ to \eqref{Eq:YM_general} which forms singularity at the origin as $t \rightarrow T^-$. Furthermore, the solution $A$ can be written in the following form
		\begin{equation}\label{Eq:Decomposition}
			A(t,x)= \frac{1}{T-t} \left( \Phi\left(\frac{x}{T-t}\right) + \varphi\left(t,\frac{x}{T-t}\right)\right),
		\end{equation}
	where for $s \in \left[\frac{d-1}{2},\frac{d}{2} \right]$ we have that
	\begin{equation}\label{Eq:varphi_small}
		\| \varphi(t,\cdot) \|_{\dot{H}^s(\R^d)} + \| (1+  \Lambda) \varphi(t,\cdot) + (T-t)\partial_t\varphi(t,\cdot) \|_{\dot{H}^{s-1}(\R^d)} \lesssim (T-t)^{\omega}
	\end{equation}
	\noindent for all $t \in [0,T)$. In particular,  $\varphi(t,\cdot) \rightarrow 0$ in $L^\infty(\R^d)$ as $t \rightarrow T^-$, i.e.,
		\begin{equation}\label{Eq:Unif_conv}
		(T-t)A(t,(T-t)\cdot) \rightarrow \Phi
	\end{equation}
	uniformly on $\R^d$ as $t \rightarrow T^-$.
\end{theorem}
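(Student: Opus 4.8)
The plan is to reformulate the blowup problem as a stability problem for a stationary solution of an autonomous evolution equation in self-similar coordinates, and then combine a spectral analysis of the linearised flow with a Lyapunov--Perron argument. Concretely, introduce $\tau=-\log(T-t)$ and $\rho=r/(T-t)$, which for $t\in[0,T)$ range over $[\tau_0,\infty)\times[0,\infty)$ and thus cover all of physical space at each time --- this is precisely what makes the conclusion global-in-space, in contrast to backward light cone analyses. Writing the radial profile as $u(t,r)=(T-t)^{-2}\psi(\tau,\rho)$ turns \eqref{Eq:YM_equiv} into an autonomous equation with stationary solution $\phi$ from \eqref{Def:BB_sol}; subtracting $\phi$ and passing to a first-order system in $\tau$, whose second component is designed to equal, in physical variables, the combination $(1+\Lambda)\varphi+(T-t)\partial_t\varphi$ appearing in \eqref{Eq:varphi_small}, one obtains
\begin{equation}
	\partial_\tau\Psi=(\mathcal{L}_0+\mathcal{L}')\Psi+\mathcal{N}(\Psi),
\end{equation}
where $\mathcal{L}_0$ generates the free radial wave flow together with the self-similar drift terms, $\mathcal{L}'$ is multiplication by the (decaying, hence $\mathcal{L}_0$-compact) potential $(d-2)\bigl(6\phi-3\rho^2\phi^2\bigr)$ obtained by linearising $(d-2)(3\psi^2-\rho^2\psi^3)$ about $\phi$, and $\mathcal{N}(\Psi)$ collects the quadratic and cubic remainder $(d-2)\bigl((3-3\rho^2\phi)\chi^2-\rho^2\chi^3\bigr)$. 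The ambient space is the one dictated by \eqref{Eq:Data_smallness} transported to similarity coordinates, which amounts to controlling the radial profile in $\dot H^s$ for $s$ ranging over $[\tfrac{d-1}{2},\tfrac d2]$, whose lower endpoint sits half a derivative above the scaling-critical exponent $\tfrac d2-1$; the theorem then reduces to showing that $\Psi$ stays $O(\varepsilon/M)$-small and decays like $e^{-\omega\tau}$ there.

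The first main task is the linear theory for $\mathcal{L}:=\mathcal{L}_0+\mathcal{L}'$. Using the representation of the radial wave group in similarity coordinates from \cite{Glo22a} --- which is valid globally in space and in \emph{all} dimensions, in particular the even ones that are new here and where no strong Huygens principle is available --- one shows that $\mathcal{L}_0$, and hence $\mathcal{L}$ as well (the correction $\mathcal{L}'$ being a relatively compact perturbation), generates a strongly continuous semigroup $S(\tau)$ on each space in play, with essential spectrum confined to a half-plane $\{\Re\lambda\le-\kappa\}$, $\kappa>0$. The crux is \emph{mode stability}: one must show that the only spectral point of $\mathcal{L}$ in $\{\Re\lambda\ge0\}$ is the simple eigenvalue $\lambda=1$, whose eigenfunction is the one generated by the blowup-time freedom $T\mapsto T'$ in the family $u_T$ (in similarity variables essentially $(2\phi+\Lambda\phi,\dots)$). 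An eigenvalue equation for $\mathcal{L}$ reduces, upon eliminating the second component, to a second-order linear ODE on $[0,\infty)$ for the first component, subject to regularity at $\rho=0$ and to the decay at $\rho=\infty$ prescribed by membership in the Sobolev space; since $\phi(\rho)=\alpha(d)/(\rho^2+\beta(d))$ is rational, this ODE is of hypergeometric type and can be integrated explicitly, so that all admissible eigenvalues are located and the absence of unstable ones other than $\lambda=1$ is established for every $d\ge5$. Together with finiteness of the point spectrum in $\{\Re\lambda\ge-\kappa\}$ this yields a spectral gap: with the rank-one Riesz projection $P$ onto the $\lambda=1$ eigenspace one has $S(\tau)P=e^{\tau}P$ and
\begin{equation}
	\|S(\tau)(\mathbb{I}-P)\Psi\|\lesssim e^{-\omega\tau}\|(\mathbb{I}-P)\Psi\|,\qquad\tau\ge0,
\end{equation}
for a small $\omega>0$, uniformly over the range of Sobolev exponents.

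The second task is the nonlinear argument. One first shows that $\mathcal{N}$ is locally Lipschitz with at least quadratic vanishing on the intersection space, $\|\mathcal{N}(\Psi)-\mathcal{N}(\tilde\Psi)\|\lesssim(\|\Psi\|+\|\tilde\Psi\|)\|\Psi-\tilde\Psi\|$; this rests on Moser/weighted product estimates, the top regularity $\dot H^{d/2}$ being present precisely to absorb the $\rho^2$-weighted cubic term $\rho^2\chi^3$ --- the estimate that forces the choice of norm in \eqref{Eq:Data_smallness} and prevents lowering it to merely above scaling. The unstable direction is removed by a Lyapunov--Perron scheme: one solves the Duhamel equation
\begin{equation}
	\Psi(\tau)=S(\tau)(\mathbb{I}-P)\Psi(0)+\int_0^\tau S(\tau-\tau')(\mathbb{I}-P)\mathcal{N}(\Psi(\tau'))\,d\tau'-\int_\tau^\infty e^{\tau-\tau'}P\,\mathcal{N}(\Psi(\tau'))\,d\tau'
\end{equation}
by the contraction principle in $\{\Psi\in C([0,\infty),\mathcal{H}):\sup_{\tau\ge0}e^{\omega\tau}\|\Psi(\tau)\|<\infty\}$, the free datum $\Psi(0)$ being, up to a rank-one correction, the transported version of \eqref{Eq:initial_data}. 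A further fixed-point/degree argument in the scalar blowup time then shows that $T\in[1-\varepsilon,1+\varepsilon]$ can be tuned so that the $P$-component of the constructed $\Psi$ vanishes, whence $\Psi$ solves the original (unprojected) equation and inherits the $e^{-\omega\tau}$ decay.

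Finally, undoing the coordinate change converts $e^{-\omega\tau}$ into $(T-t)^{\omega}$ and yields \eqref{Eq:Decomposition}--\eqref{Eq:varphi_small}; smoothness $A\in C^\infty([0,T)\times\R^d)$ follows from persistence of regularity (the data being Schwartz) together with local well-posedness and uniqueness, and \eqref{Eq:Unif_conv} follows from \eqref{Eq:varphi_small} by radial Sobolev embedding (which provides pointwise decay away from the origin) interpolated against the higher-regularity bound, plus smoothness near the origin, combined with the gained smallness. I expect the decisive difficulty to be the mode stability step --- proving that the hypergeometric eigenvalue ODE attached to $\phi(\rho)=\alpha(d)/(\rho^2+\beta(d))$ has, for every $d\ge5$, no unstable eigenvalue besides the symmetry mode $\lambda=1$ --- with the global-in-space linear decay estimate in even dimensions a close second.
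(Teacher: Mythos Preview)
Your proposal is correct and follows essentially the same architecture as the paper: similarity variables, semigroup generation for the free operator on the intersection Sobolev spaces, compactness of the potential, spectral analysis isolating $\lambda=1$, linear decay on the stable subspace, Lipschitz estimates for the nonlinearity, a Lyapunov--Perron fixed point with the blowup time $T$ tuned to kill the unstable component, and persistence of regularity.

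Two points where the paper's treatment differs from your sketch are worth flagging. First, on mode stability: while the eigenvalue ODE does have rational coefficients because $\phi$ is rational, the paper does \emph{not} integrate it explicitly; instead it performs a Frobenius analysis at the regular singular points $\rho=0$ and $\rho=1$ to show that any putative eigenfunction must be smooth on $[0,1]$, and then invokes the (nontrivial) mode-stability result from \cite{Glo22b} which rules out such solutions for $\Re\lambda\ge 0$, $\lambda\neq 1$. Your expectation that this is the decisive difficulty is correct, but ``hypergeometric, hence explicitly integrable'' understates what is actually needed. Second, for the $L^\infty$ convergence \eqref{Eq:Unif_conv}: your route via radial Sobolev plus interpolation plus smoothness-near-the-origin is delicate because the top exponent $s=d/2$ is exactly the borderline case where $\dot H^{d/2}\hookrightarrow L^\infty$ fails even for radial functions; the paper instead proves a clean endpoint embedding $\|U\|_{L^\infty(\R^d)}\lesssim\|U\|_{\dot H^{d/2}(\R^d)}$ valid specifically for \emph{corotational} maps $U(x)=u(|x|)x$ (exploiting the equivalence $\|u(|\cdot|)\sigma\|_{\dot H^k(\R^d)}\simeq\|u(|\cdot|)\|_{\dot H^k(\R^{d+2})}$ and the weighted radial estimate in $d+2$ dimensions), which yields \eqref{Eq:Unif_conv} in one line from \eqref{Eq:varphi_small}.
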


 Several remarks are in order.
\begin{remark}
	Typical functional setting for the analysis of wave equations are the $L^2$-based Sobolev spaces. As the profile $(\Phi,\Phi + \Lambda\Phi)$ belongs to $\dot{H}^s \times \dot{H}^{s-1}(\mathbb{R}^d)$ only for $s$ greater than the critical exponent $ s_c := d/2-1$, we analyze the flow \eqref{Eq:YM_general} in homogeneous Sobolev spaces of supercritical order $s > s_c$. 
	Furthermore, as no space $\dot{H}^s \times \dot{H}^{s-1}(\mathbb{R}^d)$ is invariant under the action of the underlying nonlinear operator (see \eqref{Eq:N_0} below), we  consider the intersection spaces
	\begin{equation}\label{Def:inters_sob}
		\dot{H}^{s_1} \cap \dot{H}^{s_2}(\mathbb{R}^d) \times \dot{H}^{s_1-1} \cap \dot{H}^{s_2-1}(\mathbb{R}^d), \quad s_c < s_1 \leq s_2.
	\end{equation}
	The particular choice of exponents $s_1,s_2$ in \eqref{Eq:Data_smallness} imposes itself naturally, and is driven by our desire to work in the largest space of type \eqref{Def:inters_sob}  that is invariant under the action of the nonlinear operator. Our result is in this sense optimal, and it should be contrasted with the choice of a highly suboptimal topology in our preceding work \cite{Glo22a}. We will elaborate on this more throughout the paper; see Section \ref{Sec:Est_nonlin} in particular.
\end{remark}

\begin{remark}
	The profile decomposition \eqref{Eq:Decomposition} and the estimate \eqref{Eq:varphi_small} together imply that the evolution of the perturbation \eqref{Eq:initial_data}  upon dynamical self-similar rescaling converges back to $(\Phi,\Phi + \Lambda \Phi)$ in $\dot{H}^s \times \dot{H}^{s-1}(\R^d)$ as $t \rightarrow T^-$. This corresponds to what is conventionally meant by the stability of a self-similar solution. 
\end{remark}

\begin{remark}
	We note that \eqref{Eq:Unif_conv} implies what has been observed numerically in \cite{BizTab01,Biz02}; see \cite[Figure 3, Conjecture 1]{Biz02} in particular. Derivation of \eqref{Eq:Unif_conv} from \eqref{Eq:varphi_small} follows from the fact that the space of corotational maps in $\dot{H}^{d/2}(\R^d)$ continuously embeds into $L^{\infty}(\R^d)$;  see Appendix \ref{Sec:L^inf_embedding}.
\end{remark}

\begin{remark}
	Note that the self-similar solution $A_T$, as it blows up at the origin, away from the origin it uniformly approaches a singular static profile. Namely, given $\epsilon>0$, we have that
	\begin{equation}\label{Eq:Conv_static}
		A_T(t,\cdot) \rightarrow \frac{\alpha(d)}{|\cdot|^2}\sigma(\cdot)
	\end{equation}
	uniformly on $\R^d \setminus \B^d_\epsilon$ as $t \rightarrow T^-$. It is natural to ask as to whether the solution $A$ given in \eqref{Eq:Decomposition} exhibits a similar limiting behavior.
	Although we prove global uniform convergence  in the rescaled variables \eqref{Eq:Unif_conv}, convergence in this ``microscopic" scale does not lead to insight into the limiting blowup profile  in the physical variable $x$. To conclude that the solution $A(t,\cdot)$ does stay pointwise close to the static profile in \eqref{Eq:Conv_static} (which is indeed the best one can hope for) one would need to control the evolution of the perturbation $\varphi(t,\cdot)$ in the critical space $\dot{H}^{s_c} \times \dot{H}^{s_c-1}(\R^d)$. This however, necessitates a different approach to the one we take in this paper. In particular, one would need Strichartz estimates in similarity variables, which is the route we will undertake elsewhere.
\end{remark}

 \subsection{Related results and discussion}\label{Sec:Related_results}
The origins of the Yang-Mills equations belong to particle physics and stem from the desire to describe the weak and strong interactions of elementary particles using non-abelian Lie groups.  There is a large number of works on the rigorous mathematical treatment of the Yang-Mills equations. Here we give a short and non-inclusive overview of the results that are relevant for our context; for more extensive literature review with historical developments related to local well-posedness and singularity formation, see, e.g., \cite{Tho05,BizTab01,Glo22b} and references therein. 

In the energy critical case, $d=4$, numerical evidence for the existence of finite-time blowup was produced by Bizo\'n-Tabor in \cite{BizTab01}. The mechanism they observed is typical for problems of critical type, and represents the so-called bubbling off of a static profile. Later on, the existence of such blowup mechanism was rigorously proved by Krieger-Schlag-Tataru \cite{KriSchTat09}, and Rapha\"el-Rodnianski \cite{RapRod12} who even proved stability of the solution they constructed.

In the energy supercritical case, $d\geq 5$, exhibiting blowup is easier, as self-similar solutions exist. The first construction goes back to Cazenave-Shatah-Tahvildar-Zadeh \cite{CazShaTah98} who used a variational argument to construct a self-similar solution to \eqref{Eq:YM_equiv} for $d \in \{ 5,7,9 \}$. They then used this to exhibit for $d \in \{ 6,8,10 \}$ a singular traveling wave, which represents a non-equivariant temporal gauge blowup solution for \eqref{Eq:YM_general}. What appears to be the solution they produced for $d=5$, was found later on in closed form by Bizo\'n \cite{Biz02}. Around the same time, upon performing numerical simulations, Bizo\'n-Tabor conjectured in \cite{BizTab01} (see \cite{Biz02} for the precise formulation) that Bizo\'n's explicit self-similar solution is, in fact, the universal blowup profile for generic large equivariant data evolutions. Later on, Bizo\'n-Biernat \cite{BizBie15} discovered the higher-dimensional analogue of this solution, and implicitly conjectured that, analogous to the case $d=5$, it describes the universal blowup mechanism for generic large data evolutions.
 
In terms of rigorous results on the stability of the solutions above, the first work is by Donninger \cite{Don14a} for $d=5$. His result relies on the stability analysis framework he developed together with Sch\"orkhuber-Aichelburg \cite{Don11,DonSchAic12} for wave maps (see also his preceding works \cite{Don10a,Don10}), and represents the nonlinear asymptotic stability of Bizo\'n's solution under small equivariant perturbations inside the backward lightcone of the blowup point (this corresponds to showing \eqref{Eq:Unif_conv} for the unit ball $\B^d$). This result is, however, conditional on a certain spectral assumption, which was later rigorously proved by Donninger-Costin-Huang and the author \cite{CosDonGloHua16}. Then, the extension of \cite{Don14a} to all higher odd dimensions, together with the resolution of the underlying spectral problem, was done by the author in \cite{Glo22b}. In addition to this, Donninger-Ostermann \cite{DonOst21} showed that if the initial data are highly localized and regular, then stability holds in domains that strictly contain the backward lightcone, and include regions of the space-time that go even beyond the time of blowup. 

 \subsection{Comments on the proof of the main result}
 Our proof relies on a novel stability analysis framework we put forward in \cite{Glo22a}.  This approach utilizes a global coordinate frame given by  similarity variables that are posed on the whole space $\R^d$. By this, one can keep track of the evolution of perturbations of self-similar profiles globally on $\R^d$. Consequently, in contrast to the approaches mentioned above, one obtains stability along horizontal and spatially global time slices $\{ t \} \times \R^d$. Furthermore, as a byproduct, one gets that blowup can not happen outside the origin.
 In what follows, we give a brief outline of the proof of Theorem \ref{Thm:Main}, and we point out along the way the main differences and improvements brought about in this paper. 
 
 The bulk of our work concerns the radial semilinear wave equation \eqref{Eq:YM_equiv} in $n:=d+2$ dimensions, and the stability analysis of the corresponding solution \eqref{Def:BB_sol}. At the end, we use the equivalence of Sobolev norms of the profile $u$ and the corresponding equivariant 1-form $A$, to obtain Theorem \ref{Thm:Main}.
The proof starts off by passing to   similarity variables
  \begin{equation*}\label{Def:Simil_var_intro}
 	\tau= \tau(t):=\ln \left(\frac{T}{T-t} \right),  \quad \rho=\rho(t,r):=\frac{r}{T-t}.
 \end{equation*}
The advantage of this coordinate system is that the problem of stability of finite-time self-similar blowup becomes the one of the asymptotic stability of a static profile. What is more, due to the semilinear character of the underlying equation, the new problem can be approached through the standard spectral stability method. The main disadvantage, though, is that in the new variables the self-adjoint structure of the problem is lost. Therefore, in order to establish well-posedness and perform the linearized spectral analysis, instead of relying on the existing theory of wave equations, one ought to develop new tools that account for the non-self-adjoint structure.

First, we establish the fundamental result of the linear theory. Namely, in Proposition \ref{Prop:Free_semigroup} we show that the linear evolution in similarity variables is well-posed in the radial intersection Sobolev spaces
\begin{equation}\label{Eq:Inters_Sob_intro}
	\mc H^{s_1,s_2}:= \dot{H}_{r}^{s_1} \cap \dot{H}_{r}^{s_2}(\R^n) \times \dot{H}_{r}^{s_1-1} \cap \dot{H}_{r}^{s_2-1}(\R^n), \quad 1< s_1 \leq s_2.
\end{equation}
We emphasize that, as opposed to \cite{Glo22a}, we allow in this paper for a greater range of the upper Sobolev exponent $s_2$, which makes Proposition \ref{Prop:Free_semigroup} the optimal well-posedness result in spaces \eqref{Eq:Inters_Sob_intro}.
Then, to study the linear flow near the static profile, we first establish compactness of the perturbation coming from the linearization; see Lemma \ref{Lem:Compactness}. This, in particular, allows for the reduction of the spectral analysis to the problem of determining the unstable point spectrum only.  Furthermore, in Lemma \ref{Lem:Compactness}, we allow $s_2$ to take fractional values as well, which later on ends up being essential in establishing, in a certain sense, optimal nonlinear stability result.

One of the most difficult aspects of the problem of stability in similarity variables is the spectral analysis of the linearized operator, due to it being genuinely non-self-adjoint. By this we mean that the underlying eigenvalue problem (provably) can not be reduced to a self-adjoint problem, and therefore necessitates new tools developed specifically for that context. As it turns out, this problem can be reduced to the one that corresponds to the local stability analysis (in lightcones), which we already performed in \cite{Glo22b}. Therefore, by adapting the results from \cite{Glo22b}, we show that, modulo the symmetry-induced eigenvalue $\la=1$, our solution is spectrally stable; see Proposition \ref{Prop:Pert_semigroup}. Then, by a spectral mapping theorem for compactly perturbed semigroups, we propagate the spectral stability to linear stability; see Proposition \ref{Prop:Decay_stable}.

To upgrade the linear stability further to nonlinear stability, we need some sort of continuity of the nonlinear operator. This brings us to another novel component of the paper: the continuity analysis of the underlying nonlinear operator in spaces $\mc H^{s_1,s_2}$. This is performed in Section \ref{Sec:Est_nonlin}, where we provide proofs of a series of nonlinear estimates, which in particular yield the space $\mc H^{\frac{n-3}{2},\frac{n}{2}-1}$ as the largest one among \eqref{Eq:Inters_Sob_intro} that guaranties local Lipschitz continuity; see Proposition \ref{Prop:Nonlin_est_N}. At the same time, our proofs provide a recipe for establishing optimal Schauder estimates in spaces $\mc H^{s_1,s_2}$ for a large class of nonlinearities (including the polynomial ones).

With these results at hand, standard dynamical systems theory techniques yield the nonlinear asymptotic stability in similarity variables; see Theorem \ref{Thm:CoMain}. As is usual in the orbital stability arguments, we mod out the instability $\la=1$ by properly choosing the blowup time. Then, in Section \ref{Sec:Upgrade_class} we establish the persistence of smoothness in similarity variables. Finally, in Section \ref{Sec:main_proof} we go back to physical coordinates, and use the equivalence of Sobolev norms of 1-forms and their radial profiles, so as to obtain Theorem \ref{Thm:Main}. 
   
 \subsection{Notation and conventions}

  By $\mathbb{B}_R^d$ we denote the open ball inside $\R^d$, centered at zero, with radius $R$; for the unit ball, we simply write $\B^d$. By $\mc S(\R^d)$ we denote the space of Schwartz functions. We also allow for vector functions in this space by requiring that every component be a standard scalar Schwartz function.
 Given a closed linear operator $(L,\mc D(L))$ on a Banach space $X$,  we denote by $\rho( L)$ the resolvent set of $L$, while $\sigma( L):= \mathbb{C} \setminus \rho( L)$ stands for the spectrum of $ L$, and $\sigma_p( L)$ denotes the point spectrum. Also, for $\la \in \rho( L)$ we use the following convention for the resolvent $R_{ L}(\la):=(\la I -  L)^{-1}$. By $\ker L$ and $\rg L$ we denote respectively the kernel and the range of $L$. Also, we use the convenient asymptotic notation $a \lesssim b$ to say that there is some $C>0$ such that $a \leq Cb$. Furthermore, we write $a \simeq b$ if $a \lesssim b$ and $b \lesssim a.$ We also use the Japanese bracket notation $\inner{x}= \sqrt{1+|x|^2}$.

 \section{Passage to similarity variables}\label{Sec:Simil_var}
 
 For the most part, our work is concerned with the equation \eqref{Eq:YM_equiv} and stability analysis of its solution \eqref{Def:BB_sol}. To start, we first write down the underlying Cauchy problem. For that, we let $n:=d+2$ and introduce a new dependent variable $v=v(t,x):=u(t,|x|)$ for $(t,x) \in \R \times \R^n$. In this way, we arrive at the following Cauchy problem
  \begin{equation}\label{Eq:NLW}
 	\begin{cases}
 		~~\partial^2_t v - \Delta v = (n-4) v^2 (3-|x|^2 v ),\\
 		\smallskip
 		~~v(0,\cdot)=v_0, \\
 		~~\partial_t v(0,\cdot)=v_1,
 	\end{cases}
 \end{equation}
where $v_0=u_0(|\cdot|)$ and $v_1=u_1(|\cdot|)$.
 
 \subsection{Similarity variables}
Given $T>0$, we define the \emph{(global) similarity variables}
 \begin{equation}\label{Def:Simil_var}
 	\tau= \tau(t):=\ln \left(\frac{T}{T-t} \right),  \quad \xi=\xi(t,x):=\frac{x}{T-t},
 \end{equation}
 by means of which the strip $S_T:=[0,T) \times \R^n$ is mapped   into the upper half-space $H_+:=[0,\infty) \times \R^n$. In addition, we define the rescaled dependent variable
 \begin{equation}\label{Def:psi}
 	\psi(\tau,\xi):=(T-t)^2v(t,x)=T^2e^{-2\tau}v(T-Te^{-\tau},Te^{-\tau}\xi).
 \end{equation}
 Consequently, the evolution of $v$ inside $S_T$ corresponds to the evolution of $\psi$ inside $H_+$. Note that derivative operators with respect to $t$ and $x$ in the new variables become
 \begin{equation}\label{Eq:Diff_law}
 	\partial_t = \frac{e^\tau}{T}(\partial_{\tau} + \Lambda), \quad \text{and} \quad     \partial_{x_i}= \frac{e^{\tau}}{T}\partial_{\xi_i},
 \end{equation}
 where the operator $\Lambda$ acts on the spatial variable $\xi$, and is defined in \eqref{Eq:Init_data_equiv}.  Based on \eqref{Eq:Diff_law},
 we get that the semilinear wave equation \eqref{Eq:NLW} transforms into
 \begin{equation}\label{Eq:NLW_sim_var}
 	\big(\partial^2_\tau + 5 \partial_\tau + 2 \Lambda \partial_\tau - \Delta + \Lambda^2 + 5 \Lambda + 6 \big)\psi =(n-4) \psi^2 (3-|\xi|^2 \psi ).
 \end{equation}
 To analyze \eqref{Eq:NLW_sim_var}, we follow our previous works and take up the abstract approach via the semigroup theory. To that end, we first write \eqref{Eq:NLW_sim_var} in a vector form. Namely, we define
 \begin{equation}\label{Def:psi_1}
 	\psi_1(\tau,\xi):=\psi(\tau,\xi), \quad \psi_2(\tau,\xi):=(\partial_\tau + \Lambda + 2)\psi(\tau,\xi),
 \end{equation} 
 and let\footnote{For the sake of readability, when writing inline we slightly abuse the convention and write column vectors in the row form.} $\Psi(\tau):=(\psi_1(\tau,\cdot),\psi_2(\tau,\cdot))$. This yields an evolution equation for $\Psi$
 \begin{equation}\label{Eq:Evol_equ}
 	\Psi'(\tau) = \widetilde{\bm L}_0 \Psi(\tau) + \bm N_0(\Psi(\tau)),
 \end{equation}
 where
 \begin{equation}\label{Def:Wave_oper_sim}
 	\widetilde{\bf{L}}_0= 
 	\begin{pmatrix}
 		-\Lambda -2 &1\\
 		\Delta & -\Lambda - 3
 	\end{pmatrix}
 \end{equation}
 is the wave operator in similarity variables, and for $\mb u = (u_1,u_2)$, the nonlinearity is given by
 \begin{equation}\label{Eq:N_0}
 	\mb N_0(\mb u) = 
 	\begin{pmatrix}
 		0 \vspace{1mm} \\ N_0(\cdot,u_1)
 	\end{pmatrix}
 	\quad \text{for} \quad  N_0(\xi,u_1)=(n-4) u_1^2 (3-|\xi|^2 u_1 ).
 \end{equation}
Also, the initial data become
 \begin{equation*}
 	\bm U_0(T):= 
 	\begin{pmatrix}
 		Tv_0(T\cdot) \\
 		T^2 v_1(T\cdot)
 	\end{pmatrix}.
 \end{equation*}
 Now, since \eqref{Def:BB_sol} solves \eqref{Eq:YM_equiv}, we have that for all $n \geq 7$ the function
 \begin{equation*}
 	\Psi_0:=
 	\begin{pmatrix}
 		\phi_0 \\
 		\phi_1
 	\end{pmatrix}, \quad \text{where} \quad \phi_0=\phi(|\cdot|) \quad \text{and} \quad \phi_1= 2\phi_0 +  \Lambda \phi_0,
 \end{equation*}
 is a static solution to \eqref{Eq:Evol_equ}. Then, to study evolutions of initial data near $\Psi_0$ we consider the perturbation ansatz
 \begin{equation*}
 	\Psi(\tau) = \Psi_0 + \Phi(\tau).
 \end{equation*} 
In this way, we arrive at the central evolution equation of the paper
 \begin{equation}\label{Eq:Vector_pert}
 	 \Phi'(\tau) = \big(\widetilde{\bm L}_0 + \mb V \big) \Phi(\tau) + \bm N(\Phi(\tau)), 
 \end{equation}
 where
 \begin{equation}\label{Def:V}
 	\mb V = 
 	\begin{pmatrix}
 		0 & 0 \vspace{1mm} \\ V & 0
 	\end{pmatrix}
 	\quad \text{for} \quad  V(\xi)=3(n-4)\phi_0(\xi)\big(2-|\xi|^2\phi_0(\xi)^2\big),
 \end{equation}
 and
 \begin{equation}\label{Def:N}
 	\mb N(\mb u) = 
 	\begin{pmatrix}
 		0 \vspace{1mm} \\ N(\cdot,u_1)
 	\end{pmatrix}
 	\quad \text{for} \quad  N(\xi,u_1) = (n-4)u_1^2 \big( 3-3|\xi|^2 \phi_0(\xi) - |\xi|^2u_1 \big).
 \end{equation}
 Furthermore, the initial data are now
 \begin{equation}\label{Eq:SS_initial_data}
 	\Phi(0) = \Psi(0) - \Psi_{0} =
 	\begin{pmatrix}
 		Tv_0(T\cdot) - \phi_0 \\
 		T^2 v_1(T\cdot) - \phi_1
 	\end{pmatrix}
 	=
 	\begin{pmatrix}
 		Tv_0(T\cdot) - v_0 \\
 		T^2 v_1(T\cdot) - v_1
 	\end{pmatrix}
 	+
 	\mb v =: \mb U (\mb v,T),
 \end{equation}
 where, for convenience, we denoted
 \begin{equation}\label{Def:InitCond_v}
 	\mb v = 
 	\begin{pmatrix}
 		v_0 - \phi_0 \\
 		v_1 - \phi_1
 	\end{pmatrix}.
 \end{equation}
To study the Cauchy problem \eqref{Eq:Vector_pert}-\eqref{Eq:SS_initial_data} we need a well-posedness theory for the linearized problem first. For this, we treat first the unperturbed linear problem.

\section{The linear flow in similarity variables}

\noindent The central object of this section is the free wave equation in similarity variables 
\begin{equation}\label{Eq:Free_wave}
	\Psi'(\tau) = \widetilde{\mb L}_0 \Psi(\tau).
\end{equation}
In what follows, we show that the operator $\widetilde{\mb L}_0$, when supplied with a suitable domain, is closable, with the closure generating an exponentially decaying semi-group in intersection spaces \eqref{Def:inters_sob}. First, we introduce the necessary functional setup.

\subsection{Functional setup}
Our constructions rely on the homogeneous Sobolev inner product
\begin{equation*}
	\langle u,v \rangle_{\dot{H}^s(\R^n)}=  \inner{|\cdot|^s \mc Fu,|\cdot|^s\mc Fv}_{L^2(\R^n)},
\end{equation*}
where $s \geq 0$, $u,v \in C^\infty_{c}(\R^n)$, and
the Fourier transform definition we use is
\begin{equation*}
	\mc Fu(\xi) := \frac{1}{(2 \pi)^{n/2}}\int_{\R^n} u(x)e^{-i \xi \cdot x}dx.
\end{equation*}
Consequently, we have the homogeneous Sobolev norm on $C^\infty_{c}(\R^n)$ 
\begin{equation*}
	\norm{u}^2_{\hsob{s}}:= \inner{u,u}_{\hsob{s}}.
\end{equation*}
At certain instances we will use a more general, $L^p$-based homogeneous Sobolev norm
\begin{equation*}
	\| u \|_{\dot{W}^{s,p}(\R^n)} := \| \mc F^{-1}[|\cdot|^s \mc Fu] \|_{L^p(\R^n)},
\end{equation*}
noting that the two norms above coincide for $p=2$.
When working with integer values of the Sobolev exponent $s$, we will often use an equivalent norm defined via derivatives. Namely, we will rely on the fact that given $k \in \mathbb{N}_0$
\begin{equation}\label{Eq:Norms_equiv}
	\norm{u}_{\dot{W}^{k,p}(\R^n)} \simeq \sum_{|\alpha|=k}\norm{\partial^\alpha u}_{L^p(\R^n)} 
\end{equation}
for all $u \in C^\infty_{c}(\R^n)$. Now, given $s_1,s_2$ with $0 \leq s_1 \leq s_2$ we define the following inner product on the test space $C^\infty_{c}(\R^n)$
\begin{equation*}
	\inner{u,v}_{\dot{H}^{s_1} \cap  \dot{H}^{s_2}(\R^n)  } := \inner{u,v}_{\dot{H}^{s_1}(\R^n)} + \mb{1}_{(0,\infty)}(s_2-s_1)\inner{u,v}_{\dot{H}^{s_2}(\R^n)},
\end{equation*} 
with the corresponding norm $\| \cdot \|_{\dot{H}^{s_1} \cap \dot{H}^{s_2}(\R^n)}$. Note that we also allow for $s_2=s_1$, in which case we have  $\| \cdot \|_{\dot{H}^{s_1} \cap \dot{H}^{s_2}(\R^n)} = \| \cdot \|_{\dot{H}^{s_1} (\R^n)}$.
Furthermore, if $1 \leq s_1 \leq s_2$ then for $\mb u := (u_1,u_2)$ and $\mb v := (v_1,v_2)$, both of which belong to $C^\infty_{c}(\R^n) \times C^\infty_{c}(\R^n)$, we let
\begin{equation}\label{Def:Inner_prod_H}
	\inner{\mb u,\mb v}_{\mc H^{s_1,s_2}} := \inner{u_1,v_1}_{\dot{H}^{s_1}\cap\hsob{s_2}} + \inner{u_2,v_2}_{\dot{H}^{s_1-1}\cap\hsob{s_2-1}}.
\end{equation}
Consequently,  we define the space $\mc H^{s_1,s_2}$ as the completion of the radial test space $C^{\infty}_{c,r}(\R^n) \times C^{\infty}_{c,r}(\R^n)$ under the norm defined by the inner product \eqref{Def:Inner_prod_H}. 
For simplicity, we do not explicitly indicate the dependence of $\mc H^{s_1,s_2}$ on the  spatial dimension, as we always assume it is denoted by $n$. In the case $s_2=s_1$, we write $\mc H^{s_1}:=\mc H^{s_1,s_2}$.

\subsection{Existence of the free semigroup on $\mc H^{s_1,s_2}$}\label{Sec:Free_semigroup}

\noindent  After introducing the relevant functional spaces, we now turn to showing that the free operator $\widetilde{\mb L}_0$ generates a semigroup on $\mc H^{s_1,s_2}$. First, we supply $\widetilde{\mb L}_0$ with a domain
\begin{equation*}
	\mc D(\widetilde{\mb L}_0) := C^{\infty}_{c,r}(\R^n) \times C^{\infty}_{c,r}(\R^n).
\end{equation*}
The rest of this section is devoted to proving the following fundamental result.
\begin{proposition}\label{Prop:Free_semigroup}
	Suppose that 
	\begin{equation}\label{Eq:Conds_free_semigroup}
		n \geq 3, \quad 1 < s_1 < \tfrac{n}{2}, \quad s_1 \leq s_2.
	\end{equation}
	Then the operator $\widetilde{\bf L}_0 : \mc D(\widetilde{\bf L}_0) \subseteq \mc H^{s_1,s_2} \rightarrow \mc H^{s_1,s_2}$ is closable, and its closure $(\bm L_0, \mc D(\bm L_0))$ generates a strongly continuous semigroup $({\bf S}_0(\tau))_{\tau\geq 0}$ of bounded operators on $\mc H^{s_1,s_2}$. Furthermore, the semigroup obeys the growth estimate
	\begin{equation}\label{Eq:S_0_decay}
		\norm{{\bf S}_0(\tau){\bf u}}_{\mc H^{s_1,s_2}} \leq e^{(\frac{n}{2}-2-s_1)\tau} \norm{{\bf u}}_{\mc H^{s_1,s_2}}
	\end{equation}
	for ${\bf u} \in \mc H^{s_1,s_2}$ and $\tau \geq 0$.
\end{proposition}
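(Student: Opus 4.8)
\emph{The plan} is to construct $(\mathbf S_0(\tau))_{\tau\ge 0}$ explicitly by transporting the free wave flow on $\R^{1+n}$ to similarity variables, read the sharp bound off the conservation of homogeneous Sobolev energies, and only then identify the generator with the closure of $\widetilde{\mathbf L}_0$. Since $\widetilde{\mathbf L}_0$ does not depend on $T$, I take $T=1$. For $\mathbf u=(u_1,u_2)\in\mc D(\widetilde{\mathbf L}_0)=C^\infty_{c,r}(\R^n)\times C^\infty_{c,r}(\R^n)$, let $v$ solve $\partial_t^2v-\Delta v=0$ with Cauchy data $\mathbf u$. By finite speed of propagation and propagation of regularity, $v(t,\cdot)\in C^\infty_{c,r}(\R^n)$, supported in $\mathbb B^n_{R_0+|t|}$ if the $u_j$ are supported in $\mathbb B^n_{R_0}$. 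Unwinding \eqref{Def:Simil_var}--\eqref{Def:psi_1}, the pair
\begin{equation*}
	\psi(\tau,\xi):=e^{-2\tau}v(1-e^{-\tau},e^{-\tau}\xi),\qquad \psi_2(\tau,\xi):=(\partial_\tau+\Lambda+2)\psi(\tau,\xi)=e^{-3\tau}(\partial_tv)(1-e^{-\tau},e^{-\tau}\xi)
\end{equation*}
satisfies $\Psi'=\widetilde{\mathbf L}_0\Psi$, $\Psi(0)=\mathbf u$, for $\Psi:=(\psi,\psi_2)$, so I would set $\mathbf S_0(\tau)\mathbf u:=(\psi(\tau,\cdot),\psi_2(\tau,\cdot))$. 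Since both components are supported in $\mathbb B^n_{e^\tau(R_0+1)}$, the map $\mathbf S_0(\tau)$ sends $\mc D(\widetilde{\mathbf L}_0)$ into itself, and the semigroup law follows from uniqueness for the free wave equation.

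\emph{The bound.} The key input is that the homogeneous energy $\|v(t,\cdot)\|_{\hsob{s}}^2+\|\partial_tv(t,\cdot)\|_{\hsob{s-1}}^2$ is conserved for the free wave equation for every $s\ge 1$; this is immediate from the Fourier representation of $v$ (the relevant cross terms cancel pointwise in frequency), all norms being finite on $\mc D(\widetilde{\mathbf L}_0)$. Combining this with the scaling identity $\|f(e^{-\tau}\cdot)\|_{\hsob{\sigma}}=e^{(\frac n2-\sigma)\tau}\|f\|_{\hsob{\sigma}}$ applied to $\psi$ and $\psi_2$ gives
\begin{equation*}
	\|\psi(\tau,\cdot)\|_{\hsob{s}}^2+\|\psi_2(\tau,\cdot)\|_{\hsob{s-1}}^2=e^{2(\frac n2-2-s)\tau}\big(\|u_1\|_{\hsob{s}}^2+\|u_2\|_{\hsob{s-1}}^2\big),\qquad s\ge 1.
\end{equation*}
Taking $s=s_1$ and, when $s_2>s_1$, also $s=s_2$, adding, and using $\tau\ge 0$ with $s_1\le s_2$ (so that $e^{2(\frac n2-2-s_2)\tau}\le e^{2(\frac n2-2-s_1)\tau}$), yields $\norm{\mathbf S_0(\tau)\mathbf u}_{\mc H^{s_1,s_2}}\le e^{(\frac n2-2-s_1)\tau}\norm{\mathbf u}_{\mc H^{s_1,s_2}}$ on $\mc D(\widetilde{\mathbf L}_0)$. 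By density each $\mathbf S_0(\tau)$ extends uniquely to a bounded operator on $\mc H^{s_1,s_2}$ obeying \eqref{Eq:S_0_decay}.

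\emph{Generator.} Strong continuity on $\mc D(\widetilde{\mathbf L}_0)$ follows from continuity of $t\mapsto(v(t,\cdot),\partial_tv(t,\cdot))$ into $C^\infty_{c,r}\times C^\infty_{c,r}$ and of the rescaling near the origin; via the uniform bound above it extends to all of $\mc H^{s_1,s_2}$, so $(\mathbf S_0(\tau))_{\tau\ge0}$ is a $C_0$-semigroup with generator $\mathbf G$. For $\mathbf u\in\mc D(\widetilde{\mathbf L}_0)$ every function entering the difference quotient $\tau^{-1}(\mathbf S_0(\tau)\mathbf u-\mathbf u)$ is smooth with support in a fixed ball for $\tau\in[0,1]$ and locally uniform bounds, so its pointwise limit $\widetilde{\mathbf L}_0\mathbf u$ is also its $\mc H^{s_1,s_2}$-limit; hence $\widetilde{\mathbf L}_0\subseteq\mathbf G$, and in particular $\widetilde{\mathbf L}_0$ is closable with $\mathbf L_0:=\overline{\widetilde{\mathbf L}_0}\subseteq\mathbf G$. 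Since $\mc D(\widetilde{\mathbf L}_0)$ is dense in $\mc H^{s_1,s_2}$ and invariant under $(\mathbf S_0(\tau))$, it is a core for $\mathbf G$, whence $\mathbf G=\mathbf L_0$.

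\emph{Where the work lies.} The bound \eqref{Eq:S_0_decay} falls out sharply with essentially no effort once the flow is written down; I expect the real obstacles to be the bookkeeping — verifying genuine finite-speed support propagation in the rescaled coordinates, legitimizing the energy identity for the fractional (and possibly large) exponents $s_1,s_2$ (on $\mc D(\widetilde{\mathbf L}_0)$ it is an honest computation even when $\hsob{s}$ is not itself a space of distributions), and, most delicately, upgrading pointwise convergence of the difference quotient to $\mc H^{s_1,s_2}$-convergence so that $\mathbf G$ is pinned down to be exactly $\mathbf L_0$ rather than a proper extension. One could instead run the Lumer--Phillips theorem: the dissipativity estimate $\Re\inner{\widetilde{\mathbf L}_0\mathbf u,\mathbf u}_{\mc H^{s_1,s_2}}\le(\tfrac n2-2-s_1)\norm{\mathbf u}_{\mc H^{s_1,s_2}}^2$ drops out of the Fourier-side identities $\Re\inner{-\Lambda u,u}_{\hsob{s}}=(\tfrac n2-s)\norm{u}_{\hsob{s}}^2$ and $\inner{\Delta u_1,u_2}_{\hsob{s-1}}=-\inner{u_1,u_2}_{\hsob{s}}$ (the off-diagonal contributions cancelling), but the range condition $\overline{\rg(\la_0-\widetilde{\mathbf L}_0)}=\mc H^{s_1,s_2}$ for some $\la_0>\tfrac n2-2-s_1$ would then require solving the degenerate resolvent ODE $[(\la_0+\Lambda+2)(\la_0+\Lambda+3)-\Delta]u_1=g$ and controlling its solution across the singular points $r\in\{0,1,\infty\}$ — precisely what the explicit construction sidesteps.
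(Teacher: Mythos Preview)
Your argument is correct and takes a genuinely different route from the paper. The paper proceeds via the Lumer--Phillips theorem: it imports the dissipativity estimate $\Re\inner{\mathbf L_0\mathbf u,\mathbf u}_{\mc H^{s_1,s_2}}\le(\tfrac n2-2-s_1)\norm{\mathbf u}_{\mc H^{s_1,s_2}}^2$ from \cite[Corollary~4.3]{Glo22a}, and then verifies the range condition $\rg\big((\tfrac n2-2)\mathbf I-\mathbf L_0\big)=\mc H^{s_1,s_2}$ by citing \cite[Lemma~4.6]{Glo22a} --- which is exactly the resolvent ODE analysis you identify in your last paragraph as the chief obstacle of the Lumer--Phillips route. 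Your explicit transport of the free wave flow sidesteps that ODE entirely, yields \eqref{Eq:S_0_decay} as a genuine \emph{equality} at each individual $\dot H^s$-level (not merely an inequality), and is self-contained. What the paper's route buys, on the other hand, is a concrete description of $\mc D(\mathbf L_0)$: in the course of the proof it records that any smooth radial pair obeying the decay $|\partial^\alpha u_1(x)|\lesssim\inner{x}^{s_1-\frac n2-|\alpha|-1}$, $|\partial^\alpha u_2(x)|\lesssim\inner{x}^{s_1-\frac n2-|\alpha|-2}$ lies in $\mc D(\mathbf L_0)$ with $\mathbf L_0$ acting classically. This criterion is used later (Proposition~\ref{Prop:Pert_semigroup}) to place the explicit eigenfunction $\mathbf g$ in $\mc D(\mathbf L)$, and your construction does not furnish it directly; you would need a separate argument there.
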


\begin{remark}
	Note that this proposition is a slight variation of \cite[Proposition 4.1]{Glo22a}. The operators $\widetilde{\mb L}_0$ in these two contexts differ by a unit shift (which is reflected in \eqref{Eq:S_0_decay}). The only additional difference is in the range of $s_2$. To account for this, minimal adjustments are needed; we therefore provide a short proof that relies heavily on \cite{Glo22a}.
\end{remark}

\begin{proof}
	As we are in a Hilbert space setting, we resort to the Lumer-Phillips theorem. Fix $n \geq 3$, and let $s_1,s_2$ satisfy \eqref{Eq:Conds_free_semigroup}. First of all, by
	 \cite[Corollary 4.3]{Glo22a}, we have that $( \widetilde{\mb L}_0,\mc D(\widetilde{\mb L}_0))$ is closable, and its closure $(\mb L_0,\mc D(\mb L_0))$ satisfies
	\begin{equation*}
		\Re \inner{\mb L_0 \mb u,\mb u}_{\mc H^{s_1,s_2}} \leq \left(\frac{n}{2}-2-s_1\right)\inner{\mb u, \mb u}_{\mc H^{s_1,s_2}}
	\end{equation*}
for ${\bf u} \in \mc H^{s_1,s_2}$.
Then, based on the critical Sobolev embedding 
\begin{equation*}
	\left\| u \right\|_{L^p(\R^n)} \lesssim \norm{u}_{\dot{H}^{s}(\R^n)}	,
\end{equation*}
where $0 < s < \frac{n}{2}$ and $p=p(s)$ is defined by the scaling condition $\frac{n}{p}=\frac{n}{2}-s$, we have that
\begin{equation*}
	\| \mb u \|_{L^{p(s_1)}(\R^n) \times L^{p(s_1-1)}(\R^n)} \lesssim \| \mb u \|_{\mc H^{s_1,s_2}}
\end{equation*}
for all $\mb u \in C^{\infty}_{c,r}(\R^n) \times C^{\infty}_{c,r}(\R^n)$. Consequently, we have a continuous embedding
\begin{equation}\label{Eq:Embedding_into_Lp}
	\mc H^{s_1,s_2} \hookrightarrow L^{p(s_1)}(\R^n) \times L^{p(s_1-1)}(\R^n).
\end{equation}
This, in particular, allows us to think of elements of $\mc H^{s_1,s_2}$ as pairs of pointwise almost everywhere defined functions on $\R^n$.
Furthermore, by means of this embedding we get that \cite[Lemma 4.5]{Glo22a} holds in this context as well. More precisely, by emulating the proof of \cite[Lemma 4.5]{Glo22a}, where we use \eqref{Eq:Embedding_into_Lp} instead of the $L^\infty$-embedding \cite[(4.7)]{Glo22a}, we get that if $\mb u = (u_1,u_2) \in C_r^\infty(\R^n) \times C_r^\infty(\R^n)$ such that given $\alpha \in \mathbb{N}_0^n$
	\begin{equation}\label{Eq:Decay_assumpt}
	|\partial^{\alpha}u_1(x) | \lesssim \inner{x}^{s_1-\frac{n}{2}-|\alpha|-1} \quad \text{and} \quad |\partial^{\alpha}u_2(x) | \lesssim \inner{x}^{s_1-\frac{n}{2}-|\alpha|-2}
\end{equation}
for all $x \in \R^n$, then  $\bm u \in \mc D(\bm L_0)$ and $\bm L_0 \bm u=\widetilde{\bm L}_0 \bm u$.  Finally, from \cite[Lemma 4.6]{Glo22a} we get that
$\rg  \big((\frac{n}{2}-2) \bm I - \bm L_0 \big) = \mc H^{s_1,s_2}$, and we are done.

\end{proof}
 
\section{Stability analysis in similarity variables}

This section is the heart of the paper. In it we prove a small data - global existence result for the Cauchy problem \eqref{Eq:Vector_pert}-\eqref{Eq:SS_initial_data}. More precisely, we show that there is a suitable choice of space $\mc H^{s_1,s_2}$ such that for any smooth and small enough $\mb v$, there is $T$ close to 1 for which \eqref{Eq:Vector_pert}-\eqref{Eq:SS_initial_data} admits a global classical solution that decays exponentially in $\mc H^{s_1,s_2}$. For this, we treat the linearized problem first.

\subsection{Compactness of the linear perturbation on $\mc{H}^{s_1,s_2}$}\label{Sec:Perturbed_semigroup}

\noindent To understand the linear flow near the static solution $\Psi_{0}$ we study the perturbed wave equation
\begin{equation*}
		\Phi'(\tau) = \big( \widetilde{\mb L}_0 + \mb V ) \Phi(\tau),
\end{equation*}
where $\mb V$ is given in \eqref{Def:V}.
Under the conditions of Proposition \ref{Prop:Free_semigroup}, the existence of the semigroup in $\mc{H}^{s_1,s_2}$ generated by (the closure of) the operator $\widetilde{\mb L}:=\widetilde{\mb L}_0 + \mb V$, $\mc D(\widetilde{\mb L}):=\mc D(\widetilde{\bf L}_0)$ follows from the boundedness of $\mb V : \mc{H}^{s_1,s_2} \rightarrow \mc{H}^{s_1,s_2} $, which we prove below. In addition to this, we show that whenever $s_1,s_2< \frac{n}{2}$, the operator $\mb V$ is in fact compact. This feature will be important for establishing the linear stability of $\Psi_0$ later on. In the anticipation of future applications, we establish a result for a general class of operators $\mb V$. 
\begin{lemma}\label{Lem:Compactness}
	Suppose that 
	\begin{equation}\label{Eq:Conds_param_compactness}
		n \geq 5, \quad 1 < s_1 < \tfrac{n}{2},  \quad  s_1 \leq s_2.
	\end{equation}
	Furthermore, assume that $V \in C^\infty_{r}(\R^n)$ such that given  $\alpha \in \mathbb{N}_0^n$ 
	\begin{equation*}
		|\partial^\alpha V(x) | \lesssim \inner{x}^{-2-|\alpha|}
	\end{equation*}
	for all $x \in \R^n$.
	Then the mapping
	\begin{equation}\label{Eq:Potential_map}
		\begin{pmatrix}
			u_1\\u_2
		\end{pmatrix}  \mapsto 
		\begin{pmatrix}
			0 & 0 \\ V & 0
		\end{pmatrix} 
		\begin{pmatrix}
			u_1 \\ u_2
		\end{pmatrix} 
	\end{equation}
	defines a bounded linear operator on $\mc H^{s_1,s_2}$. If, in addition, $s_2<\frac{n}{2}$, then the operator is compact. 
\end{lemma}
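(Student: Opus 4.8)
The plan is to reduce the mapping property of the $2\times 2$ operator to a single scalar statement: since the operator sends $(u_1,u_2)$ to $(0,Vu_1)$ and the target norm controls the second component in $\dot H^{s_1-1}\cap\dot H^{s_2-1}(\R^n)$ while the source norm controls the first in $\dot H^{s_1}\cap\dot H^{s_2}(\R^n)$, it suffices to show that multiplication by $V$ is bounded (respectively compact, when $s_2<\tfrac n2$) as an operator $\dot H^{s}\cap\dot H^{s'}(\R^n)\to\dot H^{s-1}\cap\dot H^{s'-1}(\R^n)$ for the relevant pair $(s,s')=(s_1,s_2)$ with $1<s_1<\tfrac n2$. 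The gain of one derivative is what makes this work: morally, $\|Vu\|_{\dot H^{\sigma-1}}\lesssim\|u\|_{\dot H^\sigma}$ because $V$ decays like $\langle x\rangle^{-2}$ and each derivative hitting $V$ produces an extra $\langle x\rangle^{-1}$, so $V$ behaves like a Hardy-type weight $\langle x\rangle^{-2}$, and the map $u\mapsto\langle x\rangle^{-2}u$ trades two orders of decay against one derivative. Concretely I would first handle integer $\sigma$ via the Leibniz rule and the equivalence \eqref{Eq:Norms_equiv}: $\partial^\alpha(Vu_1)=\sum_{\beta\le\alpha}\binom\alpha\beta(\partial^\beta V)(\partial^{\alpha-\beta}u_1)$, and each term is estimated by combining the pointwise decay of $\partial^\beta V$ with a Hardy inequality $\||x|^{-j}\partial^{\alpha-\beta}u_1\|_{L^2}\lesssim\|u_1\|_{\dot H^{|\alpha|}}$ (valid because $|\alpha|-1<\tfrac n2$ keeps us below the Hardy threshold). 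For fractional $\sigma$ one interpolates, or uses the fractional Leibniz rule / paraproduct decomposition together with the same weighted estimates.

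For the compactness assertion under the extra hypothesis $s_2<\tfrac n2$, the natural route is to exploit the embedding \eqref{Eq:Embedding_into_Lp} together with a truncation-and-Rellich argument. Write $V=V\chi_R+V(1-\chi_R)$ where $\chi_R$ is a smooth cutoff to $\B^n_R$. The tail piece $V(1-\chi_R)$ has operator norm $\lesssim R^{-1}\to 0$ by the quantitative decay bounds already used (the multiplier is supported where $\langle x\rangle\gtrsim R$, so every term in the Leibniz expansion carries a factor $R^{-1}$), hence it suffices to show $u_1\mapsto V\chi_R\,u_1$ is compact from $\dot H^{s_1}\cap\dot H^{s_2}$ into $\dot H^{s_1-1}\cap\dot H^{s_2-1}$ for each fixed $R$. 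For the compactly supported, smooth multiplier $V\chi_R$ this follows from the Rellich–Kondrachov theorem: on the ball $\B^n_R$ the inhomogeneous Sobolev embedding $H^{s_1}(\B^n_R)\hookrightarrow\hookrightarrow H^{s_1-1}(\B^n_R)$ is compact (and likewise at level $s_2$, using $s_2<\tfrac n2$ so that on the support of $\chi_R$ the homogeneous and inhomogeneous norms are comparable and the relevant Sobolev chain stays subcritical), and multiplication by the fixed smooth function $V\chi_R$ is bounded between the corresponding spaces. Taking a norm-convergent limit of compact operators then gives compactness of the full operator.

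The main obstacle I anticipate is the bookkeeping at the \emph{fractional} exponent $s_2$, which is precisely the new feature relative to \cite{Glo22a}: one must verify the weighted/product estimate $\|Vu\|_{\dot H^{s_2-1}}\lesssim\|u\|_{\dot H^{s_1}\cap\dot H^{s_2}}$ uniformly, and that the Rellich step survives when $s_2$ is not an integer. The cleanest fix is to invoke a fractional-order Hardy inequality $\||x|^{-1}u\|_{\dot H^{\sigma-1}}\lesssim\|u\|_{\dot H^{\sigma}}$ for $0<\sigma<\tfrac n2$ (equivalently, boundedness of $|x|^{-1}(-\Delta)^{(1-\sigma)/2}$-type operators), together with a Coifman–Meyer paraproduct estimate to distribute the $s_2-1$ derivatives between $V$ and $u_1$: the high–low and low–high pieces are controlled by the decay of $V$ and its derivatives against a weighted $\dot H^{s_2}$ norm of $u_1$, while the high–high piece is absorbed using the extra smoothness of $V$. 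Once these estimates are in place with constants independent of the cutoff parameter, both the boundedness and the norm-approximation-by-compact-operators arguments go through uniformly, and the radiality of all functions involved causes no difficulty since it is preserved by multiplication by the radial $V$ and by the cutoffs.
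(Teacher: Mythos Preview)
Your strategy is correct but differs from the paper's in how fractional exponents are handled. For boundedness, rather than invoking paraproducts or a fractional Hardy inequality, the paper maps $\mc H^{s_1,s_2}$ into the integer-endpoint space $\mc H^{\lfloor s_1 \rfloor, \lceil s_2 \rceil}$ (where your Leibniz-plus-Hardy argument applies verbatim, since the target derivative orders $\lfloor s_1\rfloor-1$ and $\lceil s_2\rceil-1$ are integers) and then composes with the continuous embedding $\mc H^{\lfloor s_1 \rfloor, \lceil s_2 \rceil} \hookrightarrow \mc H^{s_1,s_2}$ from \cite[Lemma~4.4]{Glo22a}, thereby avoiding fractional product estimates entirely. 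For compactness, instead of your truncation $V=V\chi_R+V(1-\chi_R)$ and norm-approximation by compact operators, the paper proves compactness only on $\mc H^k$ for \emph{integer} $1<k<\tfrac n2$ (via Rellich--Kondrachov, as in \cite[Lemma~5.1]{Glo22a}) and then invokes Cwikel's interpolation-of-compactness theorem \cite[Theorem~2.1]{Cwi92} to propagate compactness to $\mc H^s$ for all fractional $1<s<\tfrac n2$, from which the $\mc H^{s_1,s_2}$ case follows directly. Your route is more self-contained and avoids the external black box, but it forces you to carry out precisely the fractional-exponent tail and Rellich estimates you flag as the main obstacle; the paper's route trades that bookkeeping for the Cwikel citation, which is cleaner once boundedness on the full interpolation scale is already in hand.
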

\begin{proof}
	Assume \eqref{Eq:Conds_param_compactness}. To establish boundedness of \eqref{Eq:Potential_map} on $\mc H^{s_1,s_2}$, we first show that \eqref{Eq:Potential_map} is bounded as a map from $\mc H^{s_1,s_2}$ into $\mc H^{\lfloor s_1 \rfloor ,\lceil s_2 \rceil}$ and then we just compose with the continuous embedding $\mc H^{\lfloor s_1 \rfloor ,\lceil s_2 \rceil} \hookrightarrow \mc H^{s_1,s_2}$; for the proof of this embedding see \cite[Lemma 4.4]{Glo22a}. To show the boundedness part, it is enough to prove that given $\alpha \in \mathbb{N}_0^n$ for which $|\alpha| \in \{ \lfloor s_1 \rfloor-1 , \lceil s_2 \rceil-1 \} $, we have that
	\begin{equation*}
			\| \partial^\alpha (V u)  \|_{L^2(\R^n)} \lesssim \norm{u}_{\dot{H}^{s_1} \cap \hsob{s_2}}
	\end{equation*}
for all $u \in C^{\infty}_{c,r}(\R^n)$. For this, we consider multi-indices $\beta,\gamma$ for which $ \lfloor s_1 \rfloor-1  \leq |\beta| + | \gamma | \leq \lceil s_2 \rceil-1$. If $|\gamma|<s_1$ then we use the decay of $V$ (and its derivatives) and Hardy's inequality to get that
\begin{equation*}
	\| \partial^\beta V  \partial^\gamma u  \|_{L^2(\R^n)} \lesssim \norm{|\cdot|^{-{s_1}+|\gamma|}\partial^\gamma u}_{L^2(\R^n)} \lesssim \norm{u}_{\hsob{s_1}}
\end{equation*}
for all $u \in C^\infty_{c,r}(\R^n)$. If $|\gamma| \geq s_1$, then we simply have that
\begin{equation*}
	\| \partial^\beta V  \partial^\gamma u  \|_{L^2(\R^n)} \lesssim \norm{\partial^\gamma u}_{L^2(\R^n)} \lesssim \norm{u}_{\dot{H}^{s_1} \cap \hsob{s_2}}
\end{equation*}
for all $u \in C^\infty_{c,r}(\R^n)$. This establishes the claim about boundedness on $\mc H^{s_1,s_2}$. 

Now, assume that $s_2<\frac{n}{2}$. To prove compactness of \eqref{Eq:Potential_map} on $\mc H^{s_1,s_2}$, we do the following. First, we use a variant of the Rellich-Kondrachov compactness theorem to show that \eqref{Eq:Potential_map} is compact on $\mc H^k$ for integer $k$ with $1<k < \frac{n}{2}$. Then we use an ``extrapolation" argument to extend this to $\mc H^s$ for all $1 < s < \frac{n}{2}$. Compactness on $\mc H^{s_1,s_2}$ then straightforwardly follows. To carry out this plan, we start by letting $k$ be an integer satisfying $1< k <\frac{n}{2}$. Then, by emulating the proof of \cite[Lemma 5.1]{Glo22a} we get that the set
\begin{equation*}
	K_{\alpha}:=\{ \partial^\alpha (Vu) : u \in C^\infty_{c,r}(\R^n), \norm{u}_{\hsob{k}} \leq 1   \} 
\end{equation*}
is totally bounded in $L^2(\R^n)$ for all multi-indices $\alpha$ of length $k-1$. This shows compactness on $\mc H^k$. Since we already have boundedness on $\mc H^s$ for all $1<s<\frac{n}{2}$, the interpolation result by Cwikel \cite[Theorem 2.1]{Cwi92} yields compactness as well. This finishes the proof.
\end{proof}

Boundedness of $\mb V$ on $\mc H^{s_1,s_2}$ implies that the closure of $\widetilde{\mb L}$ is $\mb L:=\mb L_0 + \mb V$ with $\mc D(\mb L)= \mc D(\mb L_0)$. Furthermore, by the Bounded Perturbation Theorem for semigroups we get that the operator $\mb L$ too generates a semigroup.

\begin{proposition}\label{Prop:S}
	Assume that
		\begin{equation}\label{Eq:Conds_param_semigroup}
		n \geq 7, \quad 1 < s_1 < \tfrac{n}{2},  \quad  s_1 \leq s_2.
	\end{equation}
	 Then the operator $\bm L : \mc D(\bm L) \subseteq \mc H^{s_1,s_2} \rightarrow  \mc H^{s_1,s_2}$ generates a strongly continuous semigroup $(\bm S(\tau))_{\tau \geq 0}$ of bounded operators on $\mc H^{s_1,s_2}$. Furthermore, we have that
	\begin{equation}\label{Eq:Growth_S_pert}
		\norm{{\bf S}(\tau){\bf u}}_{\mc H^{s_1,s_2}} \leq e^{(\frac{n}{2}-2-s_1+ \| \bf V\|)\tau} \norm{{\bm u}}_{\mc H^{s_1,s_2}}
	\end{equation}
	for $\bm u \in \mc H^{s_1,s_2}$ and $\tau \geq 0$. 
\end{proposition}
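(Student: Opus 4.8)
The plan is to deduce Proposition \ref{Prop:S} from the results already assembled in the excerpt. The key inputs are: (i) Proposition \ref{Prop:Free_semigroup}, which under \eqref{Eq:Conds_free_semigroup} gives that $\widetilde{\bm L}_0$ is closable with closure $(\bm L_0,\mc D(\bm L_0))$ generating a strongly continuous semigroup $(\bm S_0(\tau))_{\tau\geq 0}$ on $\mc H^{s_1,s_2}$ satisfying the growth bound \eqref{Eq:S_0_decay}; and (ii) Lemma \ref{Lem:Compactness} applied to $\mb V$ (whose potential $V$ from \eqref{Def:V} is smooth, radial, and decays like $\inner{\xi}^{-2}$ together with the expected decay of its derivatives, since $\phi_0(\xi)=\alpha(d)/(|\xi|^2+\beta(d))$), which gives that $\mb V$ is a bounded linear operator on $\mc H^{s_1,s_2}$ with some operator norm $\|\mb V\|$. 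Note that the hypothesis $n\geq 7$ in \eqref{Eq:Conds_param_semigroup} is what guarantees $d=n-2\geq 5$, so that $\alpha(d),\beta(d)$ in \eqref{Def:Alpha_Beta} are real and $\Psi_0$ is genuinely a static solution; it also subsumes the $n\geq 5$ needed for Lemma \ref{Lem:Compactness} and the $n\geq 3$ for Proposition \ref{Prop:Free_semigroup}.

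The argument then proceeds in three short steps. First, I would verify that $V$ indeed satisfies the decay hypothesis $|\partial^\alpha V(\xi)|\lesssim\inner{\xi}^{-2-|\alpha|}$ required by Lemma \ref{Lem:Compactness}: this is immediate from the explicit formula $V(\xi)=3(n-4)\phi_0(\xi)(2-|\xi|^2\phi_0(\xi)^2)$ and the fact that $\phi_0$ is a smooth radial function behaving like $|\xi|^{-2}$ at infinity (so $\phi_0$ decays like $\inner{\xi}^{-2}$, while $|\xi|^2\phi_0^2$ is bounded and decays like $\inner{\xi}^{-2}$ as well, after the constant term is accounted for — in fact one checks $2-|\xi|^2\phi_0^2\to 2-\alpha(d)^2$... more carefully, the relevant point is simply that $V$ and all its derivatives have the stated decay, which follows by direct differentiation). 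Hence Lemma \ref{Lem:Compactness} applies and $\mb V\colon\mc H^{s_1,s_2}\to\mc H^{s_1,s_2}$ is bounded. Second, since $\mb V$ is bounded and $\mc D(\widetilde{\bm L})=\mc D(\widetilde{\bm L}_0)$, the perturbed operator $\widetilde{\bm L}=\widetilde{\bm L}_0+\mb V$ is closable with closure $\bm L=\bm L_0+\mb V$ and $\mc D(\bm L)=\mc D(\bm L_0)$; this is the standard fact that adding a bounded operator does not change closability or the domain of the closure. Third, I invoke the Bounded Perturbation Theorem for $C_0$-semigroups: since $\bm L_0$ generates $(\bm S_0(\tau))_{\tau\geq0}$ with $\|\bm S_0(\tau)\|\leq e^{(\frac n2-2-s_1)\tau}$, the operator $\bm L_0+\mb V$ generates a strongly continuous semigroup $(\bm S(\tau))_{\tau\geq0}$ obeying $\|\bm S(\tau)\|\leq e^{(\frac n2-2-s_1+\|\mb V\|)\tau}$, which is precisely \eqref{Eq:Growth_S_pert}.

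There is essentially no hard step here — the proposition is a packaging result that combines Proposition \ref{Prop:Free_semigroup}, Lemma \ref{Lem:Compactness}, and the classical bounded perturbation theorem. The only point requiring a line of genuine (but routine) verification is checking that the specific potential $V$ in \eqref{Def:V} falls under the hypotheses of Lemma \ref{Lem:Compactness}; this is where one uses the explicit form \eqref{Def:BB_sol} of the Bizo\'n--Biernat profile and the positivity of $\beta(d)$ for $d\geq5$ (equivalently $n\geq7$), which keeps $\phi_0$ smooth on all of $\R^n$. For emphasis one might also remark that $\mb V$ is in fact compact when additionally $s_2<\frac n2$, though that refinement is not needed for the semigroup generation statement itself and will only be used later (in establishing linear stability of $\Psi_0$).
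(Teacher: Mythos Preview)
Your proposal is correct and matches the paper's approach exactly: the paper simply notes that boundedness of $\mb V$ (from Lemma~\ref{Lem:Compactness}) implies the closure of $\widetilde{\bm L}$ is $\bm L=\bm L_0+\mb V$ with $\mc D(\bm L)=\mc D(\bm L_0)$, and then invokes the Bounded Perturbation Theorem to obtain the semigroup and the growth bound \eqref{Eq:Growth_S_pert}. Your additional verification that the specific potential $V$ from \eqref{Def:V} satisfies the decay hypothesis of Lemma~\ref{Lem:Compactness} is a welcome bit of detail that the paper leaves implicit.
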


The estimate \eqref{Eq:Growth_S_pert} is not sharp in general. Therefore, to get better insight into the growth properties of $\mb S(\tau)$ we resort to a spectral mapping theorem that underlies this setting. More precisely, in case $s_2 < \frac{n}{2}$, by Lemma \ref{Lem:Compactness} the operator $\mb V$ is compact, which puts us in the setting of \cite[Theorem B.1]{Glo22b}. Still, to apply this theorem we need to determine the unstable point spectrum of $\mb L$, which leads us to the next section.

\subsection{Spectral analysis of the linearized operator on $\mc H^{s_1,s_2}$}\label{Sec:Spec_analy_L}

The following proposition summarizes the spectral properties of $\mb L$ that will be relevant later on.
\begin{proposition}\label{Prop:Pert_semigroup}
	Assume that
	\begin{equation*}
		n \geq 7 \quad \text{and} \quad \frac{n}{2}-2 < s_1 \leq s_2 < \frac{n}{2}.
	\end{equation*}
Then there exists $\omega \in (0,s_1+2-\frac{n}{2})$ such that
\begin{equation*}
		\{ \la \in \sigma(\bm L) : \Re \la \geq -\omega \} = \{ 1 \}.
\end{equation*}
Furthermore, $\la=1$ is a simple eigenvalue of $\bm L$ with an explicit eigenfunction
\begin{equation}\label{Def:g}
	\bm g:=
	\begin{pmatrix}
		g \\ \Lambda g + 3g
	\end{pmatrix},
	\quad \text{where} \quad g(\xi)=\frac{1}{\big(|\xi|^2+\beta(n)\big)^2}.
\end{equation}
\end{proposition}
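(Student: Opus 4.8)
The strategy is to transfer the spectral analysis from the similarity-variable setting on all of $\R^n$ to the lightcone setting already handled in \cite{Glo22b}, exploiting the fact that $\mb V$ is compact (Lemma \ref{Lem:Compactness}, applicable since $s_2<\tfrac{n}{2}$). First I would record that compactness of $\mb V$ together with the growth bound \eqref{Eq:S_0_decay} for the free semigroup gives, via the standard perturbation theory for the essential spectrum (e.g.\ \cite[Theorem B.1]{Glo22b}), that the part of $\sigma(\mb L)$ with $\Re\la > \tfrac{n}{2}-2-s_1$ consists of isolated eigenvalues of finite multiplicity. Hence the whole problem reduces to locating the point spectrum of $\mb L$ in the half-plane $\Re\la > -\omega$ for $\omega$ slightly below $s_1+2-\tfrac{n}{2}$, i.e.\ to an ODE analysis of the eigenvalue equation $(\la \mb I - \widetilde{\mb L}_0 - \mb V)\mb u = 0$.

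Next I would unpack that eigenvalue system. Writing $\mb u = (u_1,u_2)$, the first component gives $u_2 = (\la + \Lambda + 2)u_1$, and substituting into the second yields a second-order radial ODE for $u_1$ in the variable $|\xi|=\rho$, of the hypergeometric type that appeared in \cite{Glo22b} for the interior (lightcone) stability problem — indeed the operator $\widetilde{\mb L}_0 + \mb V$ here differs from the one in \cite{Glo22b} only by the unit shift already noted after Proposition \ref{Prop:Free_semigroup}, which simply translates the spectral parameter. The key structural point to verify is that an $\mc H^{s_1,s_2}$-eigenfunction on $\R^n$, when restricted to the unit ball, is automatically an admissible eigenfunction for the lightcone problem of \cite{Glo22b}, and conversely that the relevant solutions of the ODE extend from the ball to all of $\R^n$ with the decay needed to lie in $\mc H^{s_1,s_2}$ — here one uses the asymptotics of the hypergeometric solutions at $\rho=\infty$ together with the Sobolev bookkeeping, matching the decay assumptions \eqref{Eq:Decay_assumpt}. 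Granting this, the classification of unstable eigenvalues from \cite{Glo22b} carries over: the only eigenvalue with $\Re\la \geq 0$ is the symmetry-induced $\la=1$, it is simple, and its eigenfunction is the explicit one obtained by differentiating the family $u_T$ in $T$, which after the change of variables is exactly $\bm g$ in \eqref{Def:g}; one checks directly that $g(\xi)=(|\xi|^2+\beta(n))^{-2}$ solves the ODE and that $(g,\Lambda g+3g)\in\mc H^{s_1,s_2}$ for the stated range of $s_1$. Finally, since the spectrum below $\Re\la=0$ in $\Re\la>-\omega$ is discrete, one can choose $\omega\in(0,s_1+2-\tfrac{n}{2})$ small enough to avoid the finitely many eigenvalues in the strip $-\omega \le \Re\la < 0$ bounded away from $1$ — or, better, invoke from \cite{Glo22b} that there are none with $\Re\la\ge -\omega$ for suitable $\omega$ — yielding $\{\la\in\sigma(\mb L):\Re\la\ge-\omega\}=\{1\}$.

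\textbf{Main obstacle.} The crux is the reduction to the lightcone problem: one must argue carefully that the global $\mc H^{s_1,s_2}$ spectral problem on $\R^n$ has the \emph{same} unstable eigenvalues as the local problem solved in \cite{Glo22b}. The nontrivial directions are (i) showing no new eigenvalues are introduced by the exterior region — i.e.\ that the growth/decay dichotomy of the ODE solutions at $\rho=\infty$ genuinely excludes all candidate eigenfunctions except at $\la=1$, which requires the $\rho\to\infty$ connection formulae for the hypergeometric-type equation and the precise Sobolev-decay threshold $s_1-\tfrac{n}{2}$ — and (ii) confirming simplicity of $\la=1$ globally (no Jordan block), which amounts to checking that the generalized eigenvalue equation $(\mb I-\mb L)\mb h=\bm g$ has no solution in $\mc H^{s_1,s_2}$; this is done by the same Fredholm/solvability computation as in \cite{Glo22b}, now with the extra requirement that the putative $\bm h$ decay at infinity, which (being an extra constraint) can only help. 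Everything else — the compactness input, the semigroup growth bound, and the explicit verification that $\bm g$ is an eigenfunction — is routine given the results already established in the excerpt and in \cite{Glo22b}.
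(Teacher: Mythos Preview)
Your overall strategy is correct and matches the paper's: use compactness of $\mb V$ plus \cite[Theorem B.1]{Glo22b} to reduce to point spectrum, then reduce the eigenvalue system to a scalar second-order ODE in $\rho=|\xi|$, and finally invoke the lightcone results of \cite{Glo22b}. The explicit eigenfunction $\bm g$ and the simplicity argument via the Riesz projection are also handled as in the paper.

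However, you misidentify the main technical obstacle. The paper does \emph{not} analyze the ODE at $\rho=\infty$, and no connection formulae there are needed. The crux is instead a Frobenius analysis at the regular singular points $\rho=0$ and, crucially, $\rho=1$. One must show that an $\mc H^{s_1,s_2}$-eigenfunction, when restricted to the unit ball, is actually $C^\infty[0,1]$: at $\rho=0$ the non-analytic Frobenius branch behaves like $\rho^{2-n}$ and is excluded by the embedding \eqref{Eq:Embedding_into_Lp}; at $\rho=1$ the Frobenius indices are $\{0,\tfrac{n-5}{2}-\lambda\}$, and the nontrivial step is to verify that the non-analytic branch (or the logarithmic one, when the indices differ by a nonnegative integer) fails the local $H^{s_1}$ regularity near $\rho=1$ --- this is a case-by-case check depending on the parity of $n$ and whether $\tfrac{n-5}{2}-\lambda\in\mathbb{N}_0$. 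Once $\tilde u_1\in C^\infty[0,1]$ is established, \cite[Proposition 3.2]{Glo22b} gives the contradiction directly. The exterior region $\rho>1$ plays no role in excluding eigenvalues.

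Likewise, for simplicity of $\lambda=1$ the paper does not impose or check any decay condition at infinity on a putative generalized eigenfunction; it simply observes that $\rg\mb P$ is finite-dimensional (again by compactness and \cite[Theorem B.1]{Glo22b}) and then reduces the question $(\mb I-\mb L)\mb u=-\bm g$ to the interval $[0,1]$, where \cite[Proposition 3.4]{Glo22b} already rules it out. Your plan would work, but the decay analysis at $\rho=\infty$ you flag as the hard part is unnecessary, while the Frobenius argument at $\rho=1$ --- which you do not mention --- is the actual content.
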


\begin{proof}
 We heavily rely on two results from \cite{Glo22b}, namely Propositions 3.2 and 3.4. We therefore provide a somewhat short proof which emphasizes the adjustments that are necessary to adapt these results to the global setting of this paper.

Note that the components of $\mb g$ satisfy the decay conditions \eqref{Eq:Decay_assumpt}. Therefore $\mb g \in \mc D(\mb L)$, and consequently $\mb L \mb g = \widetilde{\mb L} \mb g = \mb g $, where the second equality follows by a straightforward calculation. We furthermore have that 
\begin{equation*}
	\{ \la \in \sigma (\mb L)  : \Re \la > \frac{n}{2}-2-s_1 \} = \sigma_p(\mb L),
\end{equation*}
which follows from \cite[Theorem B.1]{Glo22b}. This theorem furthermore implies that the first claim of the proposition is equivalent to
\begin{equation}\label{Eq:Empty_set}
	S_0:=\{ \la \in \sigma_p(\mb L) : \Re \la \geq 0, \la \neq 1  \} = \emptyset. 
\end{equation}
Therefore, in what follows, we prove \eqref{Eq:Empty_set}. We assume the contrary, that there are $\la \in S_0$ and $\mb u=(u_1,u_2) \in \mc D(\mb L)$ such that 
\begin{equation}\label{Eq:Eigenv_vector}
	\mb L \mb u = \la \mb u.
\end{equation}
This, in particular, means that there exists a function $\tilde{u}_1 : [0,\infty) \rightarrow \mathbb{C}$ with the following properties:
\begin{itemize}
	\setlength{\itemsep}{1mm}
	\item[1.] $\tilde{u}_1(|\cdot|)=u_1$ almost everywhere in $\R^n$,
	\item[2.] $\tilde{u}_1 \in C^k(0,\infty)$ and $\tilde{u}_1^{(k)} \in H^{s_1-k}(a,b)$, for any $k \in \{0,1,\dots,\lfloor s_1 \rfloor \}$ and $0<a<b$,
	\item[3.] $\tilde{u}_1$	satisfies the ODE
	\begin{align}\label{Eq:Eigenv}
		(1-\rho^2)\tilde{u}_1''(\rho)+\left(\frac{n-1}{\rho}-2(\la+3)\rho \right)&\tilde{u}_1'(\rho)		\nonumber
		\\-(\la+2)&(\la+3)\tilde{u}_1(\rho)+V(\rho)\tilde{u}_1(\rho)=0
	\end{align}
weakly on $(0,\infty)$.
\end{itemize}   
 By the analyticity properties of the coefficient functions in \eqref{Eq:Eigenv}, we can further assume that $\tilde{u}_1$ belongs to  $C^\infty(0,1) \cap C^\infty(1,\infty)$ and solves \eqref{Eq:Eigenv} classically on $(0,1) \cup (1,\infty)$. To understand the asymptotic/analytic behavior of $\tilde{u}_1$ near $\rho=0$ and $\rho=1$ we use Frobenius theory, as both of these are regular singular points of \eqref{Eq:Eigenv}. In this way we show that, in fact, $\tilde{u}_1 \in C^\infty[0,1]$.  First we treat the point $\rho=0$.
 The set of Frobenius indices there is $\{ 0,2-n \}$. Therefore, there are two linearly independent classical solutions on $(0,1)$ that have the following expansions there
\begin{equation*}
		\tilde{u}_{1,1}(\rho)=\sum_{n=0}^{\infty}a_n\rho^n \quad \text{and} \quad \tilde{u}_{1,2}(\rho)=C\log(\rho)\tilde{u}_{1,1}(\rho)+\rho^{2-n}\sum_{n=0}^{\infty}b_n\rho^n,
\end{equation*}
 for $a_0=b_0=1$ and  some $C \in \mathbb{C}.$ If $C=0$ then both solutions are analytic at $\rho=0$, and, being their linear combination, $\tilde{u}_1$ is also analytic there. If $C\neq 0$, then the fact that $\tilde{u}_{1,2}(|\cdot|) \notin L^{p(s_1)}(\B^n_{\varepsilon}) $ for $0<\varepsilon<1$, implies, according to \eqref{Eq:Embedding_into_Lp}, that 
 $\tilde{u}_1$ is a constant multiple of $\tilde{u}_{1,1}$ near zero. Therefore, $\tilde{u}_1 \in C^\infty[0,1)$.
 
 Now we treat the point $\rho=1$. The set of Frobenius indices there is $\{ 0,\frac{n-5}{2}-\lambda \}$, and we therefore distinguish two cases. First, we assume that $\frac{n-5}{2}-\lambda \in \mathbb{N}_0$. Then, there is a pair of linearly independent classical solutions to \eqref{Eq:Eigenv} on $(0,1)$ that have the following expansions there
  \begin{equation*}
	\tilde{u}_{1,1}(\rho)=(1-\rho)^{\frac{n-5}{2}-\la} \sum_{n=0}^{\infty}a_n(1-\rho)^n \quad \text{and} \quad \tilde{u}_{1,2}(\rho)=C\log(1-\rho)\tilde{u}_{1,1}(\rho)+\sum_{n=0}^{\infty}b_n(1-\rho)^n
\end{equation*}
for $a_0=b_0=1$ and some $C\in \mathbb{C}.$ If $C=0$ then both solutions are analytic near $\rho=1$ and so must be $\tilde{u}_1$. Let now $C\neq 0$. We claim that $\tilde{u}_{1,2}^{(\lfloor s_1 \rfloor ) } \notin H^{\{s_1\}}(\varepsilon,1) $. Indeed, if $n$ is odd then $\{s_1\}>\frac{1}{2}$ and therefore $H^{\{s_1\}}(\varepsilon,1) \hookrightarrow L^\infty(\varepsilon,1)$, but $\tilde{u}_{1,2}^{(\lfloor s_1 \rfloor ) }$ is unbounded near $\rho=1$; if $n$ is even then $\tilde{u}_{1,2}^{(\lfloor s_1 \rfloor ) }$ does not belong to $L^2(\varepsilon,1)$ and therefore not to $H^{\{s_1\}}(\varepsilon,1)$ either. Consequently, based on the second of the three properties of $\tilde{u}_1$ listed above, we conclude that $\tilde{u}_1$ is a multiple of $\tilde{u}_{1,1}$ and is therefore analytic at $\rho=1$. It remains to treat the case $\frac{n-5}{2}-\lambda \notin \mathbb{N}_0$. The two normalized Frobenius solutions in that case are given by 
  \begin{equation*}
	\tilde{u}_{1,1}(\rho)=(1-\rho)^{\frac{n-5}{2}-\la} \sum_{n=0}^{\infty}a_n(1-\rho)^n \quad \text{and} \quad \tilde{u}_{1,2}(\rho)=\sum_{n=0}^{\infty}b_n(1-\rho)^n,
\end{equation*}
for $a_0=b_0=1$. Similarly to above, we conclude that $\tilde{u}_{1,1}^{(\lfloor s_1 \rfloor ) } \notin H^{\{s_1\}}(\varepsilon,1) $, and thereby infer that $\tilde{u}_1$ is a multiple of $\tilde{u}_{1,2}$. In conclusion, we have that $\tilde{u}_1 \in C^\infty[0,1]$. This however, is not possible, as proven in \cite[Proposition 3.2]{Glo22b}. We hence arrive at a contradiction, thereby proving the first claim of the proposition.

It remains to show that $\la=1$ is a simple eigenvalue of $\mb L$. For this, we define the Riesz projection 
\begin{equation*}
	\mb P:= \frac{1}{2\pi i}\int_{\gamma}\mb R_{\mb L}(\la) \, d\la,
\end{equation*}
where $\gamma$ is a positively oriented circle centered at $1$ with radius $r_\gamma < 1$. From the definition of $\bm P$ we have that $\bm P \bm g = \bm g$, and therefore $\inner{\bm g} \subseteq \rg \bm P$. To prove the reversed inclusion, we do the following. Since the operator $\mb V$ is compact, we use \cite[Theorem B.1]{Glo22b} to conclude that  $\rg \bm P$ is finite dimensional. Then, based on this, it is enough to show that there does not exist $\bm u \in \mc D(\bm L)$ such that $(\bm I - \bm L)\bm u = -\bm g$. By applying the reasoning from above, this follows from the proof of \cite[Proposition 3.4, i]{Glo22b}. 
\end{proof}

\subsection{Linear stability of the static profile}

Since the operator $\mb L$ has no unstable spectral points other than $\la=1$, the spectral mapping theorem \cite[Theorem B.1]{Glo22b} readily implies the linear stability ``orthogonal" to the unstable eigenspace.

\begin{proposition}[Exponential decay on the stable subspace]\label{Prop:Decay_stable}
	Assume that
\begin{equation*}
		n \geq 7, \quad \frac{n}{2}-2 < s_1\leq s_2 < \frac{n}{2},
\end{equation*}
	 and let $\omega$  be the one from Proposition \ref{Prop:Pert_semigroup}. Then there exists $C \geq 1$ such that
	\begin{equation}\label{Eq:Decay_stable}
		\norm{\bm S(\tau)(\bm I-\bm P)\bm u}_{\mc H^{s_1,s_2}} \leq C e^{-\omega\tau}\norm{(\bm I-\bm P)\bm u}_{\mc H^{s_1,s_2}}
	\end{equation}
	for all $\bm u \in \mc H^{s_1,s_2}$ and all $\tau \geq 0$.
\end{proposition}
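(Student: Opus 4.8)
The plan is to deduce the estimate directly from the spectral localization in Proposition \ref{Prop:Pert_semigroup} together with the spectral mapping theorem for compactly perturbed semigroups, \cite[Theorem B.1]{Glo22b}. First I would assemble the inputs. Since $s_2<\tfrac{n}{2}$, Lemma \ref{Lem:Compactness} shows that $\bm V$ is a \emph{compact} operator on $\mc H^{s_1,s_2}$, so $\bm L=\bm L_0+\bm V$ is a compact perturbation of $\bm L_0$; by Proposition \ref{Prop:S} it generates the semigroup $(\bm S(\tau))_{\tau\geq 0}$, while by Proposition \ref{Prop:Free_semigroup} the semigroup $(\bm S_0(\tau))_{\tau\geq 0}$ generated by $\bm L_0$ has growth bound at most $\tfrac{n}{2}-2-s_1$. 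Because $\omega\in(0,s_1+2-\tfrac{n}{2})$ (as produced by Proposition \ref{Prop:Pert_semigroup}), we have $\tfrac{n}{2}-2-s_1<-\omega<0$, i.e. the threshold $-\omega$ strictly exceeds the growth bound of $\bm S_0$. This places us precisely in the hypotheses of \cite[Theorem B.1]{Glo22b}.

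Next I would apply that theorem with spectral threshold $-\omega$. It then guarantees that $\sigma(\bm L)\cap\{\Re\la\geq -\omega\}$ consists of at most finitely many eigenvalues of finite algebraic multiplicity; by Proposition \ref{Prop:Pert_semigroup} this set equals $\{1\}$. The theorem also furnishes the associated spectral projection attached to the part of the spectrum in $\{\Re\la\geq -\omega\}$; since the only enclosed spectral point is $\la=1$, a contour deformation (Cauchy's theorem, no spectrum being crossed between a circle about $1$ and the boundary of a half-plane strip) identifies this projection with the Riesz projection $\bm P$ defined in the proof of Proposition \ref{Prop:Pert_semigroup}. This projection commutes with $\bm S(\tau)$, and the conclusion of \cite[Theorem B.1]{Glo22b} is exactly the existence of a constant $C\geq 1$ with
\[
  \norm{\bm S(\tau)(\bm I-\bm P)\bm u}_{\mc H^{s_1,s_2}} \leq C e^{-\omega\tau}\norm{(\bm I-\bm P)\bm u}_{\mc H^{s_1,s_2}}
\]
for all $\bm u\in\mc H^{s_1,s_2}$ and $\tau\geq 0$, which is \eqref{Eq:Decay_stable}. (That $C\geq 1$ is forced by evaluating at $\tau=0$.)

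I do not expect any genuine analytic obstacle here: the proposition is a bookkeeping consequence of facts already in place, namely semigroup generation (Proposition \ref{Prop:S}), compactness of the linear perturbation (Lemma \ref{Lem:Compactness}, using $s_2<\tfrac n2$), the growth bound of the free semigroup (Proposition \ref{Prop:Free_semigroup}), and the spectral picture of $\bm L$ (Proposition \ref{Prop:Pert_semigroup}). The only points deserving a line of care are (i) verifying that the compactness statement of Lemma \ref{Lem:Compactness} is of the form required by \cite[Theorem B.1]{Glo22b} in the present global-in-space functional setting, and (ii) the contour-deformation identification of the spectral projection produced by the theorem with the projection $\bm P$ used elsewhere in the paper, so that \eqref{Eq:Decay_stable} is stated with a consistent $\bm P$. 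Both are immediate.
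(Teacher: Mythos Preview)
Your proposal is correct and follows precisely the paper's approach: the paper states that the proposition follows readily from the spectral mapping theorem \cite[Theorem B.1]{Glo22b} once the unstable spectrum of $\bm L$ is known to be $\{1\}$, and you have simply spelled out the verification of hypotheses (compactness of $\bm V$, growth bound of $\bm S_0$, and the spectral localization from Proposition \ref{Prop:Pert_semigroup}) in more detail than the paper does.
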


\subsection{Estimates of the nonlinearity in the intersection spaces}\label{Sec:Est_nonlin}

This section is devoted to proving local Lipschitz continuity of the nonlinear operator $\mb N$ in $\mc H^{s_1,s_2}$, for suitably chosen Sobolev exponents $s_1,s_2$. We start with proving two auxiliary lemmas.

\begin{lemma}\label{Lem:Nonlin_est_1}
 Let $n \geq 5$. Suppose that $f \in C^\infty(\R^n)$ such that given $\alpha \in \mathbb{N}_0^n$
	\begin{equation}\label{Decay_f}
		|\partial^\alpha f (x)| \lesssim 
		\langle x \rangle^{-|\alpha|} 
	\end{equation}
for all $x \in \R^n$. Then, whenever $s_1,s_2$ satisfy
$
1 \leq s_1 \leq \frac{n}{2}-1 \leq s_2,
$
we have that
	\begin{equation}\label{Eq:Nonlin_1}
		\| f u_1u_2 \|_{\dot{H}^{s_1-1} \cap \dot{H}^{s_2-1} (\R^n)} \lesssim  \prod_{i=1}^{2} \| u_i \|_{\dot{H}^{s_1} \cap \dot{H}^{s_2} (\R^n)}
	\end{equation}
for all $u_1,u_2 \in C^\infty_{c}(\R^n)$.
\end{lemma}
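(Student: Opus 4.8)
The strategy is to establish the estimate \eqref{Eq:Nonlin_1} by splitting it into two pieces according to the two constituent norms of the intersection space on the left, and in each case reducing to a product estimate in a single homogeneous Sobolev space. Since $s_1-1$ and $s_2-1$ straddle (or touch) the value $\frac{n}{2}-1$, and the homogeneous Sobolev space $\dot H^{\frac{n}{2}-1}(\R^n)$ is exactly the scaling-critical space for the relevant product, the two halves are handled somewhat differently: for the lower exponent $s_1-1 \leq \frac n2 -1$ we can afford to lose derivatives onto $f$ and still keep both factors $u_1,u_2$ in a subcritical Sobolev space; for the upper exponent $s_2-1 \geq \frac n2 -1$ we must place at least one factor in a space at or above critical regularity.

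\emph{Step 1: the lower piece $\|fu_1u_2\|_{\dot H^{s_1-1}}$.} I would first treat the case of integer $s_1$, where \eqref{Eq:Norms_equiv} lets me replace $\|fu_1u_2\|_{\dot H^{s_1-1}}$ by $\sum_{|\alpha|=s_1-1}\|\partial^\alpha(fu_1u_2)\|_{L^2}$ and distribute derivatives by the Leibniz rule. For each term $\partial^{\alpha_0}f\,\partial^{\alpha_1}u_1\,\partial^{\alpha_2}u_2$ with $|\alpha_0|+|\alpha_1|+|\alpha_2|=s_1-1$, I use the boundedness of $f$ and its derivatives from \eqref{Decay_f} to drop $\partial^{\alpha_0}f$ at the cost of a constant, and then estimate $\|\partial^{\alpha_1}u_1\,\partial^{\alpha_2}u_2\|_{L^2}$ by Hölder with exponents dictated by the fractional Leibniz / Gagliardo–Nirenberg interpolation: write $\|\partial^{\alpha_i}u_i\|_{L^{p_i}}\lesssim \|u_i\|_{\dot H^{s_1}}$ whenever $\frac{n}{p_i}=\frac n2-(s_1-|\alpha_i|)$ and $0<s_1-|\alpha_i|<\frac n2$, using the critical Sobolev embedding $\dot H^{\sigma}(\R^n)\hookrightarrow L^{p(\sigma)}(\R^n)$. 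The exponent bookkeeping $\frac{1}{p_1}+\frac{1}{p_2}=\frac12$ closes precisely because $|\alpha_1|+|\alpha_2|\leq s_1-1$ and $s_1\leq \frac n2-1$; the endpoint cases where some $s_1-|\alpha_i|$ equals $0$ or $\frac n2$ are excluded by the hypothesis $1\leq s_1\leq \frac n2 -1$ and $n\geq 5$ (so $\frac n2-1\geq 3/2>1$). For non-integer $s_1$, I would instead invoke the fractional Leibniz rule (Kato–Ponce) together with the same Sobolev embeddings, or alternatively interpolate between neighboring integer cases.

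\emph{Step 2: the upper piece $\|fu_1u_2\|_{\dot H^{s_2-1}}$.} Here $s_2-1\geq \frac n2-1$, so $s_2\geq \frac n2$, meaning $u_i\in\dot H^{s_1}\cap\dot H^{s_2}$ sits in particular in $\dot H^{s_2}$ with $s_2\geq \frac n2 > \frac n2-1$, hence $u_i\in L^\infty$ after one uses $\dot H^{s_1}\cap\dot H^{s_2}\hookrightarrow L^\infty$ (this embedding follows from the critical embedding $\dot H^{n/2-1}\hookrightarrow L^{p}$ for large $p$ combined with $\dot H^{s_2}$ control of high frequencies, or directly from $s_1<\frac n2<s_2$). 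I would again split by the Leibniz rule and, in each term $g\,\partial^{\alpha_1}u_1\,\partial^{\alpha_2}u_2$ (with $g$ an $f$-derivative, bounded), move all but one derivative onto a single factor: estimate one factor in $L^\infty$ (via $\dot H^{s_1}\cap\dot H^{s_2}\hookrightarrow L^\infty$) and the remaining high-regularity factor in $\dot H^{s_2-1}$ (losing the $f$-derivatives into bounded multipliers via \eqref{Decay_f}), controlling cross terms by interpolating between $\dot H^{s_1}$ and $\dot H^{s_2}$ to hit the needed intermediate $L^p$ spaces. Because $f$ and all its derivatives are bounded, multiplication by $f$ or its derivatives is a harmless bounded operation at every level, which is what makes the slowly-decaying (merely bounded) weight $f$ acceptable here.

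\emph{Main obstacle.} The technical heart is the endpoint and fractional-exponent bookkeeping in Step 2: when $s_2-1$ is noninteger one cannot naively use \eqref{Eq:Norms_equiv}, and one must be careful that the fractional Leibniz rule is applied with Lebesgue exponents that are all strictly between $1$ and $\infty$ and that the Sobolev embedding $\dot H^{\sigma}\hookrightarrow L^{p(\sigma)}$ is used only for $0<\sigma<\frac n2$. Verifying that the hypotheses $1\leq s_1\leq \frac n2-1\leq s_2$ and $n\geq 5$ force every intermediate exponent into the admissible open range — in particular that one never lands exactly on the forbidden endpoints $L^1$, $L^\infty$, or the critical $\dot H^{n/2}$ in the wrong place — is the delicate part; once that is set up, the estimates are routine Hölder and interpolation.
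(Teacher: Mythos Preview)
Your proposal has a genuine gap: in both steps you discard $\partial^{\alpha_0}f$ as a bounded constant, but the hypothesis gives \emph{decay} $|\partial^{\alpha_0}f(x)|\lesssim\langle x\rangle^{-|\alpha_0|}$, and that decay is essential. Concretely, take in Step~1 the Leibniz term with all derivatives on $f$, namely $(\partial^\alpha f)\,u_1u_2$ with $|\alpha|=s_1-1$. After dropping $\partial^\alpha f$ by boundedness you would need
\[
\|u_1u_2\|_{L^2(\R^n)}\lesssim\|u_1\|_{\dot H^{s_1}\cap\dot H^{s_2}}\|u_2\|_{\dot H^{s_1}\cap\dot H^{s_2}}.
\]
But $\dot H^{s_1}\cap\dot H^{s_2}\hookrightarrow\dot H^s$ only for $s\in[s_1,s_2]$, hence into $L^p$ only for $p\ge\frac{2n}{n-2s_1}$, so by H\"older the product lies in $L^q$ only for $q\ge\frac{n}{n-2s_1}$; this exceeds $2$ whenever $s_1>n/4$, which the hypotheses permit (e.g.\ $n=7$, $s_1=2$, $s_2=\tfrac52$ gives $q\ge\tfrac73$). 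The identical obstruction recurs in Step~2 for the corresponding term. A secondary issue: the embedding $\dot H^{s_1}\cap\dot H^{s_2}\hookrightarrow L^\infty$ you invoke in Step~2 fails at the allowed endpoint $s_2=\tfrac n2-1$, where the intersection is just $\dot H^{n/2-1}\not\hookrightarrow L^\infty$.

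The paper's proof repairs this by retaining the decay of $\partial^{\alpha_0}f$ and converting it into derivatives via the generalized Hardy--Rellich inequality $\bigl\|\,|\cdot|^{-k}v\bigr\|_{L^p}\lesssim\|v\|_{\dot W^{k,p}}$. It first embeds $\dot H^{s_i-1}\hookrightarrow\dot W^{\lfloor s_i\rfloor,p}$ (raising to integer order, lowering $p$), applies Leibniz, splits the weight coming from $|\partial^{\alpha_0}f|\lesssim|\cdot|^{-|\alpha_0|}$ as $|\cdot|^{-k_1}|\cdot|^{-k_2}$ with $k_1+k_2\le|\alpha_0|$, and then bounds each factor $\bigl\|\,|\cdot|^{-k_j}\partial^{\alpha_j}u_j\bigr\|_{L^{p_j}}$ by $\|u_j\|_{\dot H^{s_1}\cap\dot H^{s_2}}$ via Hardy--Rellich followed by Sobolev embedding. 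A short case analysis shows admissible $(k_1,k_2,p_1,p_2)$ always exist. The Hardy--Rellich step is precisely the ingredient your outline is missing.
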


\begin{proof}
To establish this lemma we rely on two fundamental results from harmonic analysis. The first one is Sobolev embedding, i.e., that
\begin{equation}\label{Eq:Sobol_embed}
	\norm{u}_{\dot{W}^{t,q}(\R^n)} \lesssim \norm{u}_{\dot{W}^{s,p}(\R^n)}  
\end{equation}
for all $u \in C^{\infty}_{c}(\R^n)$ whenever $1 < p < q < \infty$  and $s,t \geq 0$ obey the scaling condition  $s-\frac{n}{p} = t - \frac{n}{q}$; see, e.g., \cite[p.~335]{Tao06}. The second result is the generalized Hardy-Rellich inequality, i.e., that
\begin{equation}\label{Eq:Hardy-Relich}
	\left\|\frac{1}{|\cdot|^k}u \right\|_{L^p(\R^n)} \lesssim \norm{u}_{\dot{W}^{k,p}(\R^n)}
\end{equation}
for all $u \in  C^{\infty}_{c}(\R^n)$ whenever $1 < p < \infty$, $k \in \mathbb{N}_0$, and $k  < \frac{n}{p}$; see, e.g., \cite[Corollary 14]{DavHin98}.

Now, given $i \in \{ 1,2\}$, by Sobolev embedding \eqref{Eq:Sobol_embed}, equivalence \eqref{Eq:Norms_equiv}, condition \eqref{Decay_f}, and H\"older's inequality,  we have that 
\begin{align}\label{Eq:Prod_est}
	\| f u_1u_2 \|_{\dot{H}^{s_i-1}(\R^n)} 
	&\lesssim \| f u_1u_2 \|_{\dot{W}^{\lfloor s_i \rfloor,p}(\R^n)} \nonumber \\
	&\lesssim \sum_{\sum_{j=0}^{2}|\alpha_j| =\lfloor s_i \rfloor}\| \partial^{\alpha_0}f \, \partial^{\alpha_1}u_1 \, \partial^{\alpha_2}u_2 \|_{L^p(\R^n)} \nonumber \\
	& \lesssim \sum_{\sum_{j=0}^{2}|\alpha_j| =\lfloor s_i \rfloor} \left \| \frac{1}{|\cdot|^{k_1}} \partial^{\alpha_1} u_1 \right \|_{L^{p_1}(\R^n)} \left \| \frac{1}{|\cdot|^{k_2}} \partial^{\alpha_2} u_2 \right \|_{L^{p_2}(\R^n)} 
\end{align}
for all $u_1,u_2 \in C^\infty_{c}(\R^n)$, whenever 
\begin{equation}\label{Cond1}
	k_1,k_2 \in \mathbb{N}_0,  \quad k_1 + k_2 \leq |\alpha_0|, 
\end{equation}
\begin{equation}\label{Cond2}
2 \leq p_1,p_2 < \infty, \quad \frac{n}{p_1} + \frac{n}{p_2} = \frac{n}{2}+1-\{s_i\}.
\end{equation}
Now, we derive \eqref{Eq:Nonlin_1} from \eqref{Eq:Prod_est}  by a combination of \eqref{Eq:Hardy-Relich} and \eqref{Eq:Sobol_embed}. For this, it is enough to show that given $|\alpha_0|+|\alpha_1|+|\alpha_2|= \lfloor s_i \rfloor$ there is a choice of $k_1,k_2,p_1,p_2$ which, in addition to  \eqref{Cond1} and \eqref{Cond2}, satisfy the following  conditions
\begin{equation}\label{Cond3}
	k_1 < \frac{n}{p_1}, \quad k_2 < \frac{n}{p_2},
\end{equation}
\begin{equation}
	s_1 \leq k_1 + |\alpha_1| + \frac{n}{2}-\frac{n}{p_1} \leq s_2,
\end{equation}
\begin{equation}\label{Cond5}
	s_1 \leq k_2 + |\alpha_2| + \frac{n}{2}-\frac{n}{p_2} \leq s_2.
\end{equation}
To show that making such choice is always possible, we assume without loss of generality that $ | \alpha_1 | \leq |\alpha_2 |$ and then distinguish several cases.

First, if $|\alpha_2 | \geq  \frac{n}{2}-1 $, we can take $k_1=k_2=0$, $p_2=2$, and $p_1=\frac{n}{1-\{s_i\}}$. All five conditions \eqref{Cond1}-\eqref{Cond5} manifestly hold.

Now, assume $|\alpha_2 | < \frac{n}{2}-1$ and $| \alpha_1 | + |\alpha_2 | \geq  \lfloor s_1 \rfloor +1$. Then necessarily $i=2$ and we can therefore take $k_1 = k_2 = 0$, $\frac{n}{p_2}=1+|\alpha_2| $ and $p_1$ defined by \eqref{Cond2}. Straightforward check shows that all five conditions \eqref{Cond1}-\eqref{Cond5} are satisfied.

Finally, we assume that $| \alpha_1 | + |\alpha_2 | \leq \lfloor s_1 \rfloor$. If $i=1$ or $\lfloor s_1 \rfloor=\lfloor s_2 \rfloor$, then an admissible choice is $k_1 = \lfloor s_1 \rfloor- | \alpha_1 | - |\alpha_2 | $, $k_2=0$, $\frac{n}{p_2}=1+|\alpha_2| $, and $p_1$ defined by \eqref{Cond2}. Straightforward check shows that \eqref{Cond1}-\eqref{Cond5} hold. If $i=2$ and $\lfloor s_1 \rfloor < \lfloor s_2 \rfloor$, then it is enough to choose $k_1 = \lfloor s_1 \rfloor- | \alpha_1 | - |\alpha_2 | $, $k_2=1$, $\frac{n}{p_2}=\frac{n}{2}+|\alpha_2|-\lfloor s_1 \rfloor-\{ s_2\}  $, and $p_1$ defined by \eqref{Cond2}.
\end{proof}

We will also need the following result.

\begin{lemma}\label{Lem:Nonlin_est_2}
	If $1 \leq s_1 \leq \frac{n-3}{2} \leq s_2$ then
	\begin{equation*}
		\| |\cdot|^2 u_1 u_2 u_3 \|_{\dot{H}^{s_1-1} \cap \dot{H}^{s_2-1} (\R^n)} \lesssim \prod_{i=1}^{3}\| u \|_{\dot{H}^{s_1} \cap \dot{H}^{s_2} (\R^n)}
	\end{equation*}
	for all $u_1,u_2,u_3 \in C^\infty_{c,r}(\R^n)$.
\end{lemma}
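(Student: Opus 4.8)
The plan is to reduce Lemma~\ref{Lem:Nonlin_est_2} to Lemma~\ref{Lem:Nonlin_est_1} by absorbing one factor of $|\cdot|$ into each of two of the three functions, using the radiality of the $u_i$. Concretely, for a radial function $u \in C^\infty_{c,r}(\R^n)$ one writes $u = u(|\cdot|)$ and observes that $|\cdot| \, u$ is again a function on $\R^n$ that, while not smooth at the origin in general, is smooth away from $0$ and — crucially — enjoys on all of $\R^n$ estimates of the form claimed for a "profile" multiplied by a bounded smooth factor. The cleaner route is to note the pointwise identity $|\xi|^2 u_1(\xi) u_2(\xi) u_3(\xi) = \big(f_1(\xi) u_1(\xi)\big)\big(f_2(\xi) u_2(\xi)\big) u_3(\xi)$ is \emph{not} available with smooth $f_i$, so instead one keeps the weight intact and runs the same Leibniz-plus-Hardy machinery as in the proof of Lemma~\ref{Lem:Nonlin_est_1}, now with the smooth weight $f \equiv 1$ replaced by the weight $|\xi|^2$, which satisfies $|\partial^\alpha |\xi|^2| \lesssim \langle \xi \rangle^{2-|\alpha|} \le \langle\xi\rangle^{2}$ (and vanishes for $|\alpha| \ge 3$). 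The gain of two powers of $|\xi|$ is exactly what shifts the admissible range from $\frac{n}{2}-1$ down to $\frac{n-3}{2}$: wait, that is a shift by $\tfrac12$, not by $1$, so in fact only \emph{one} extra power of $|\xi|$ relative to Lemma~\ref{Lem:Nonlin_est_1} is being exploited favorably, while the second extra power must be spent elsewhere. This is the subtlety to get right.

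First I would set up the Leibniz expansion exactly as in \eqref{Eq:Prod_est}: for $i \in \{1,2\}$ and $p$ given by Sobolev embedding at level $\lfloor s_i\rfloor$,
\begin{equation*}
\big\| |\cdot|^2 u_1 u_2 u_3 \big\|_{\dot H^{s_i-1}(\R^n)} \lesssim \sum_{|\alpha_0|+|\alpha_1|+|\alpha_2|+|\alpha_3| = \lfloor s_i\rfloor} \big\| \partial^{\alpha_0}(|\cdot|^2) \, \partial^{\alpha_1}u_1 \, \partial^{\alpha_2}u_2 \, \partial^{\alpha_3}u_3 \big\|_{L^p(\R^n)},
\end{equation*}
where $|\partial^{\alpha_0}(|\cdot|^2)| \lesssim |\cdot|^{2-|\alpha_0|}$ contributes a favorable negative weight $|\cdot|^{-|\alpha_0|}$ together with an overall $|\cdot|^{2}$. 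Then I would split the weight $|\cdot|^2 |\cdot|^{-|\alpha_0|}$ across the three factors as $|\cdot|^{-k_1}|\cdot|^{-k_2}|\cdot|^{-k_3}$ with $k_1+k_2+k_3 = |\alpha_0|-2$ (so two negative powers are "used up" converting the $|\cdot|^2$ into room, which is why one cannot gain two full orders), apply generalized Hardy--Rellich \eqref{Eq:Hardy-Relich} to turn each $\||\cdot|^{-k_j}\partial^{\alpha_j}u_j\|_{L^{p_j}}$ into $\|\partial^{\alpha_j}u_j\|_{\dot W^{k_j,p_j}}$, and finish with Sobolev embedding \eqref{Eq:Sobol_embed} into $\dot H^{s_1}\cap\dot H^{s_2}$. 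The exponent bookkeeping is the analogue of conditions \eqref{Cond1}--\eqref{Cond5}: one needs $\tfrac{n}{p_1}+\tfrac{n}{p_2}+\tfrac{n}{p_3} = \tfrac{n}{2}+1-\{s_i\}+(\text{shift from the weight})$, each $k_j < \tfrac{n}{p_j}$, each $p_j \in [2,\infty)$, and $s_1 \le k_j + |\alpha_j| + \tfrac n2 - \tfrac n{p_j} \le s_2$ for $j=1,2,3$. One must also handle the degenerate cases $|\alpha_0| \in \{0,1,2\}$ where no negative weight (or less than the full $|\cdot|^{-|\alpha_0|}$) is produced by the derivative hitting $|\cdot|^2$; in those cases the leftover positive powers of $|\cdot|$ are the obstruction and force the hypothesis $s_1 \le \tfrac{n-3}{2}$ rather than $s_1 \le \tfrac{n}{2}-1$.

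I expect the main obstacle to be precisely this case analysis on how the weight $|\cdot|^2$ is distributed once the multi-index $\alpha_0$ hitting it is small — i.e., ensuring that in every configuration of $(\alpha_0,\alpha_1,\alpha_2,\alpha_3)$ with total length $\lfloor s_i\rfloor$ one can still choose admissible $(k_j,p_j)$. As in Lemma~\ref{Lem:Nonlin_est_1} I would order $|\alpha_1|\le|\alpha_2|\le|\alpha_3|$ and branch on whether $|\alpha_3| \ge \tfrac n2 - 1$ (put everything on the highest-derivative factor at $L^2$, route the remaining weight and low-derivative factors into a single $L^{p}$ slot), whether the two lower multi-indices already exceed $\lfloor s_1\rfloor$ (forcing $i=2$, so the $\dot H^{s_2}$ component is available), and otherwise the "generic" case where positive Hardy weights $k_j$ are allocated to two of the three factors. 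Because we now have three functions and an extra weight, the combinatorics is heavier than in Lemma~\ref{Lem:Nonlin_est_1}, but no genuinely new analytic input is needed — only \eqref{Eq:Sobol_embed}, \eqref{Eq:Hardy-Relich}, \eqref{Eq:Norms_equiv}, H\"older, and the radiality of the $u_i$ (used so that $|\cdot|^2 u_1u_2u_3$ is again a radial function and $\partial^\alpha(|\cdot|^2)$ makes pointwise sense with the stated decay). The final step is to check the two endpoints $s_i \in \{s_1,s_2\}$ separately and conclude \eqref{Eq:Nonlin_1}'s analogue by summing the two contributions, exactly mirroring the closing lines of the proof of Lemma~\ref{Lem:Nonlin_est_1}.
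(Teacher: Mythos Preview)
There is a genuine gap. Your toolkit — Sobolev embedding \eqref{Eq:Sobol_embed}, Hardy--Rellich \eqref{Eq:Hardy-Relich}, H\"older, Leibniz — cannot handle the terms where a net \emph{positive} power of $|\cdot|$ survives. Hardy--Rellich only trades $|\cdot|^{-k}$ with $k \in \mathbb{N}_0$ for derivatives; it says nothing about $|\cdot|^{+a}$. So in the case $|\alpha_0|=0$ (no derivative lands on $|\cdot|^2$) you are left with $\big\| |\cdot|^2 \,\partial^{\alpha_1}u_1\,\partial^{\alpha_2}u_2\,\partial^{\alpha_3}u_3 \big\|_{L^p}$, and your proposed splitting $k_1+k_2+k_3=|\alpha_0|-2=-2$ is impossible. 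You flag these ``degenerate cases'' and assert that the tighter hypothesis $s_1\le\tfrac{n-3}{2}$ absorbs the obstruction, but you never say how; with the listed tools alone there is no mechanism that turns a growing weight into Sobolev control, uniformly in the support of the $u_i$.

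The missing ingredient is the weighted $L^\infty$ embedding for \emph{radial} functions,
\[
\big\|\,|\cdot|^{\frac{n}{2}-s}\partial^\alpha u\,\big\|_{L^\infty(\R^n)} \lesssim \|u\|_{\dot H^{|\alpha|+s}(\R^n)}, \qquad \tfrac12<s<\tfrac n2,
\]
which is exactly what the paper invokes (see \eqref{Eq:Sobolev_inf}). This is where radiality enters substantively — not merely to keep the product radial. The paper's scheme is to place two of the three factors in weighted $L^\infty$ via this estimate (absorbing the positive powers of $|\cdot|$) and the third in $L^2$ via Hardy, proving the estimate first at integer orders; fractional $s$ is then reached by multilinear complex interpolation in the low range and by sandwiching between consecutive integers in the high range. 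Your Leibniz/H\"older/Hardy skeleton is compatible with this, but without the radial $L^\infty$ embedding the argument does not close.
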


\begin{proof}
To prove this lemma, we crucially rely on a Sobolev embedding for weighted $L^\infty$-norms of derivatives of radial functions, i.e., \cite[Proposition B.1]{Glo22a}, which we, for convenience, copy here. Namely, for $n \geq 2$, $\frac{1}{2} < s <\frac{n}{2}$, and $\alpha \in \mathbb{N}_0^n$, we have that
\begin{equation}\label{Eq:Sobolev_inf}
	\| |\cdot|^{\frac{n}{2}-s} \partial^{\alpha}u   \|_{L^\infty(\R^n)} \lesssim \| u \|_{\dot{H}^{|\alpha|+s}(\R^n)}
\end{equation}
for all $u \in C^\infty_{c,r}(\R^n)$. Now, in order to establish the lemma, we rely on a number of auxiliary estimates. First, given $k \in \mathbb{N}_0$ with $k \leq  \lceil \frac{n}{2} \rceil -2$, we have that 
	\begin{equation}\label{Eq:low_k}
			\| |\cdot|^2 u_1 u_2 u_3 \|_{\dot{H}^{k}(\R^n)} 
			\lesssim \| u_1 \|_{\dot{H}^{k+1}(\R^n)}
			\prod_{i=2}^{3}\| u_i \|_{\dot{H}^{\frac{n-3}{2}}(\R^n)}
	\end{equation}
for all $u_1,u_2,u_3 \in C^\infty_{c,r}(\R^n)$. To show this, it is enough to prove that
\begin{equation}\label{Eq:low_k2}
	\| |\cdot|^{2-\ell} \partial^{\alpha_1} u_1 \, \partial^{\alpha_2} u_2 \, \partial^{\alpha_3}  u_3 \|_{L^2(\R^n)} 
	\lesssim \| u_1 \|_{\dot{H}^{k+1}(\R^n)}
	\prod_{i=2}^{3}\| u_i \|_{\dot{H}^{\frac{n-3}{2}}(\R^n)}
\end{equation}
for all $u_1,u_2,u_3 \in C^\infty_{c,r}(\R^n)$, whenever $\ell \in \{0,1,2\}$ and $\ell+|\alpha_1|+|\alpha_2|+|\alpha_3|=k$ with $|\alpha_2| \leq |\alpha_3|$. To this end, we first note that the left-hand side of \eqref{Eq:low_k2} is bounded by  
\begin{align*}
		\| |\cdot|^{\frac{n}{2}-k-1+|\alpha_1|}  \partial^{\alpha_1} u_1 \|_{L^\infty (\R^n)} 
		\| |\cdot|^{\frac{3}{2}+|\alpha_2|} \partial^{\alpha_2} u_2 \|_{L^\infty(\R^n)}
		\| |\cdot|^{-(\frac{n-3}{2}-|\alpha_3|)}  \, \partial^{\alpha_3}  u_3 \|_{L^2(\R^n)}.
\end{align*}
Then, by applying the estimate \eqref{Eq:Sobolev_inf} to the first two terms and Hardy's inequality to the third, we arrive at \eqref{Eq:low_k2}. Consequently, \eqref{Eq:low_k} holds.
Now, by multilinear complex interpolation (see, e.g., \cite{BerLoe76}), we propagate \eqref{Eq:low_k} to fractional Sobolev exponents, i.e., given $1 \leq s \leq \lceil \frac{n}{2} \rceil -1$ we have that
	\begin{equation}\label{Eq:low_s}
	\| |\cdot|^2 u_1 u_2 u_3 \|_{\dot{H}^{s-1}(\R^n)} 
	\lesssim \| u_1 \|_{\dot{H}^{s}(\R^n)}
	\prod_{i=2}^{3}\| u_i \|_{\dot{H}^{\frac{n-3}{2}}(\R^n)}
\end{equation}
for all $u_1,u_2,u_3 \in C^\infty_{c,r}(\R^n)$. To cover the higher values of $s$ we do something different. First, given integer $k \geq  \lceil \frac{n}{2} \rceil - 1$, we prove that
\begin{equation}\label{Eq:high_k}
		\| |\cdot|^2 u_1 u_2 u_3 \|_{\dot{H}^{k}(\R^n)} 
	\lesssim 
	\prod_{i=1}^{3}\| u_i \|_{\dot{H}^{\frac{n-3}{2}} \cap \dot{H}^{k}(\R^n)}
\end{equation}
for all $u_1,u_2,u_3 \in C^\infty_{c,r}(\R^n)$. In line with the reasoning from above, to show this, it is enough to establish the following estimate
\begin{equation}\label{Eq:high_k2}
	\| |\cdot|^{2-\ell} \partial^{\alpha_1} u_1 \, \partial^{\alpha_2} u_2 \, \partial^{\alpha_3}  u_3 \|_{L^2(\R^n)} 
	\lesssim \prod_{i=1}^{3}\| u_i \|_{\dot{H}^{\frac{n-3}{2}} \cap \dot{H}^{k}(\R^n)}
\end{equation}
for all $u_1,u_2,u_3 \in C^\infty_{c,r}(\R^n)$, whenever $\ell \in \{0,1,2\}$ and $\ell+|\alpha_1|+|\alpha_2|+|\alpha_3|=k$ with $|\alpha_1| \leq |\alpha_2| \leq |\alpha_3|$. To this end, we treat two cases. First, if $|\alpha_3| \geq  \lceil \frac{n}{2} \rceil - 1$, then we estimate the left-hand side by
	\begin{align*}
		\| |\cdot|^{1-\frac{\ell}{2}}  \partial^{\alpha_1} u_1 \|_{L^\infty (\R^n)} 
		\| |\cdot|^{1-\frac{\ell}{2}} \partial^{\alpha_2} u_2 \|_{L^\infty(\R^n)}
		\|    \partial^{\alpha_3}  u_3 \|_{L^2(\R^n)},
	\end{align*}
wherefrom the estimate \eqref{Eq:high_k2} follows by \eqref{Eq:Sobolev_inf}. Now, in the complementary case, $|\alpha_3| <  \lceil \frac{n}{2} \rceil - 1$, which is equivalent to $|\alpha_3| \leq  \frac{n-3}{2} $, we estimate the left-hand side of \eqref{Eq:high_k2} by
\begin{align*}
	\| |\cdot|^{a_1}  \partial^{\alpha_1} u_1 \|_{L^\infty (\R^n)} 
	\| |\cdot|^{a_2} \partial^{\alpha_2} u_2 \|_{L^\infty(\R^n)}
	\|  |\cdot|^{-(\frac{n-3}{2}-|\alpha_3|)}  \partial^{\alpha_3}  u_3 \|_{L^2(\R^n)},
\end{align*} 
where $a_1=a_2=\frac{1}{2}\left( \frac{n+1}{2}-|\alpha_3|-\ell\right)$. From here, the estimate \eqref{Eq:Sobolev_inf} and Hardy's inequality lead to \eqref{Eq:high_k}. 
Now, to propagate \eqref{Eq:high_k} to fractional Sobolev exponents, we do the following. First, assume $s > \left \lceil \frac{n}{2}  \right \rceil -1.$
Then the estimate
\begin{equation*}
	\| |\cdot|^2 u_1 u_2 u_3 \|_{\dot{H}^{s-1}(\R^n)} 
	\lesssim 	\| |\cdot|^2 u_1 u_2 u_3 \|_{\dot{H}^{\lfloor s-1 \rfloor }(\R^n)}  + 	\| |\cdot|^2 u_1 u_2 u_3 \|_{\dot{H}^{\lceil s-1 \rceil}(\R^n)}
\end{equation*}
 implies, according \eqref{Eq:low_k} and \eqref{Eq:high_k}, that
\begin{equation}\label{Eq:high_s}
	\| |\cdot|^2 u_1 u_2 u_3 \|_{\dot{H}^{s-1}(\R^n)} 
	\lesssim 
	\prod_{i=1}^{3}\| u_i \|_{\dot{H}^{\frac{n-3}{2}} \cap \dot{H}^{s}(\R^n)}
\end{equation}
for all $u_1,u_2,u_3 \in C^\infty_{c,r}(\R^n)$, whenever 	$s > \left \lceil \frac{n}{2}  \right \rceil -1$.
Finally, from \eqref{Eq:low_s} and \eqref{Eq:high_s} the claim of the lemma follows.
\end{proof}
With the above two lemmas at hand, we readily obtain the central result of this section; for the statement we need the following definition
\begin{equation*}
	\mc B_{\delta}^{s_1,s_2}:= \{ \mb u \in \mc H^{s_1,s_2} : \| \mb u \|_{\mc H^{s_1,s_2}} \leq \delta \}.
\end{equation*}
\begin{proposition}\label{Prop:Nonlin_est_N}
	Let $\delta >0$. Then, given
	\begin{equation*}
		n \geq 5, \quad 1 \leq s_1 \leq \frac{n-3}{2}, \quad \frac{n}{2}-1 \leq s_2,
	\end{equation*}
	we have that
	\begin{equation}\label{Eq:Nonlin_est}
		\| \bm {N} (\bm u) - \bm N (\bm v) \|_{\mc H^{s_1,s_2}} \lesssim \big ( \| \bm u \|_{\mc H^{s_1,s_2}} + \| \bm v  \|_{\mc H^{s_1,s_2}} \big) \| \bm u - \bm v  \|_{\mc H^{s_1,s_2}}
	\end{equation}
	for all $\bm u, \bm v \in \mc B_\delta^{s_1,s_2}$.
\end{proposition}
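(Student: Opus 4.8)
The plan is to derive \eqref{Eq:Nonlin_est} directly from the two auxiliary estimates Lemma \ref{Lem:Nonlin_est_1} and Lemma \ref{Lem:Nonlin_est_2}, together with an elementary algebraic identity and a density argument. First I would reduce to test functions: since $\mc H^{s_1,s_2}$ is by definition the completion of $C^\infty_{c,r}(\R^n)\times C^\infty_{c,r}(\R^n)$ and both sides of \eqref{Eq:Nonlin_est} depend continuously on $\bm u,\bm v$, it suffices to prove the estimate when $\bm u=(u_1,u_2)$ and $\bm v=(v_1,v_2)$ have all components in $C^\infty_{c,r}(\R^n)$; the passage to general elements of $\mc B_\delta^{s_1,s_2}$, which simultaneously yields that $\bm N$ maps $\mc B_\delta^{s_1,s_2}$ into $\mc H^{s_1,s_2}$, is then a standard approximation argument using the embedding \eqref{Eq:Embedding_into_Lp}. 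Because the first component of $\bm N$ vanishes (see \eqref{Def:N}), the inner product \eqref{Def:Inner_prod_H} gives $\|\bm N(\bm u)-\bm N(\bm v)\|_{\mc H^{s_1,s_2}}=\|N(\cdot,u_1)-N(\cdot,v_1)\|_{\dot H^{s_1-1}\cap\dot H^{s_2-1}(\R^n)}$, while the $\mc H^{s_1,s_2}$-norms of $\bm u-\bm v$, $\bm u$, and $\bm v$ dominate the $\dot H^{s_1}\cap\dot H^{s_2}(\R^n)$-norms of $u_1-v_1$, $u_1$, and $v_1$, respectively. Thus the problem reduces to bounding $\|N(\cdot,u_1)-N(\cdot,v_1)\|_{\dot H^{s_1-1}\cap\dot H^{s_2-1}}$ in terms of these three quantities.

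Next I would expand the difference. Using $u_1^2-v_1^2=(u_1-v_1)(u_1+v_1)$ and $u_1^3-v_1^3=(u_1-v_1)(u_1^2+u_1v_1+v_1^2)$ in \eqref{Def:N}, and writing $f(\xi):=3\big(1-|\xi|^2\phi_0(\xi)\big)$, one obtains
\begin{equation*}
	N(\cdot,u_1)-N(\cdot,v_1)=(n-4)(u_1-v_1)\Big[\,f\,(u_1+v_1)-|\cdot|^2\big(u_1^2+u_1v_1+v_1^2\big)\,\Big].
\end{equation*}
The first summand on the right is a weighted product of two factors, and Lemma \ref{Lem:Nonlin_est_1} applies to it provided $f$ obeys the decay \eqref{Decay_f}; this is immediate since, by \eqref{Def:BB_sol}, $\phi_0(\xi)=\alpha(d)\big(|\xi|^2+\beta(d)\big)^{-1}$ with $\beta(d)>0$, so $f$ is smooth and bounded on $\R^n$ and, for $|\alpha|\geq 1$, $\partial^\alpha f(\xi)$ is a constant multiple of $\partial^\alpha\big((|\xi|^2+\beta(d))^{-1}\big)$, whence $|\partial^\alpha f(\xi)|\lesssim\langle\xi\rangle^{-2-|\alpha|}\leq\langle\xi\rangle^{-|\alpha|}$. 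The second summand is a weighted product of three factors; splitting $u_1^2+u_1v_1+v_1^2$ into the monomials $u_1u_1$, $u_1v_1$, $v_1v_1$ and applying Lemma \ref{Lem:Nonlin_est_2} to each term bounds it by $\|u_1-v_1\|_{\dot H^{s_1}\cap\dot H^{s_2}}\big(\|u_1\|_{\dot H^{s_1}\cap\dot H^{s_2}}+\|v_1\|_{\dot H^{s_1}\cap\dot H^{s_2}}\big)^2$. I would also record that the hypotheses $1\leq s_1\leq\frac{n-3}{2}$ and $\frac{n}{2}-1\leq s_2$ force $s_1\leq\frac{n-3}{2}<\frac{n}{2}-1\leq s_2$, so the exponent conditions of both lemmas (namely $1\leq s_1\leq\frac{n}{2}-1\leq s_2$ and $1\leq s_1\leq\frac{n-3}{2}\leq s_2$) are met, and $n\geq 5$ covers the dimensional restrictions.

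Finally, I would add the two bounds and use $\bm u,\bm v\in\mc B_\delta^{s_1,s_2}$, i.e.\ that $\|u_1\|_{\dot H^{s_1}\cap\dot H^{s_2}}\leq\delta$ and $\|v_1\|_{\dot H^{s_1}\cap\dot H^{s_2}}\leq\delta$, to absorb one factor of the cubic contribution into the ($\delta$-dependent) implicit constant; together with $\|u_1+v_1\|\leq\|u_1\|+\|v_1\|$ and the reduction step, this produces precisely \eqref{Eq:Nonlin_est}. I do not expect a genuine obstacle here: the entire harmonic-analytic content --- the choice of H\"older exponents, the interplay of Sobolev embeddings with Hardy--Rellich inequalities, and the weighted radial $L^\infty$-bounds for derivatives --- is already packaged into Lemmas \ref{Lem:Nonlin_est_1} and \ref{Lem:Nonlin_est_2}. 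The only points that need (minor) care are verifying the decay of the coefficient $f$ and the density/approximation argument making $\bm N$ well defined on all of $\mc B_\delta^{s_1,s_2}$ with values in $\mc H^{s_1,s_2}$.
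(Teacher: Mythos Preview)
Your proof is correct and follows essentially the same route as the paper: reduce by density and multilinearity to radial test functions, then apply Lemma~\ref{Lem:Nonlin_est_1} to the quadratic piece and Lemma~\ref{Lem:Nonlin_est_2} to the cubic piece. The paper's argument is simply a more compressed version of what you wrote, so there is nothing to add or correct.
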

\begin{proof}
By multilinearity of $\mb N$ and density, it is enough to show \eqref{Eq:Nonlin_est} for $\mb u, \mb v \in C^{\infty}_{c,r}(\R^n) \times C^{\infty}_{c,r}(\R^n) \, \cap \, \mc B_{2\delta}^{s_1,s_2}$. This follows directly from Lemmas \ref{Lem:Nonlin_est_1} and \ref{Lem:Nonlin_est_2}.
\end{proof}
Based on what we have done so far in this section, we infer that the space $\mc H^{\frac{n-3}{2},\frac{n}{2}-1}$ is the largest of the spaces $\mc H^{s_1,s_2}$ that ensures local Lipschitz continuity of the nonlinear operator $\bm N$. In the rest of the paper, for simplicity we denote
\begin{equation*}
	\mc H := \mc H^{\frac{n-3}{2},\frac{n}{2}-1}.
\end{equation*}

\subsection{Existence of global strong solutions for small data}

\noindent With the nonlinear estimate \eqref{Eq:Nonlin_est} at hand, we are in the position to construct strong solutions to \eqref{Eq:Vector_pert}. For convenience, we copy here the underlying Cauchy problem
\begin{equation}\label{Eq:Vector_pert_2}
	\begin{cases}
		~\Phi'(\tau) = \bm L\Phi(\tau) + \bm N(\Phi(\tau)),\\
		~\Phi(0)=\bm U(\bm v, T).
	\end{cases}	
\end{equation}
Since \eqref{Eq:Vector_pert_2} is semilinear, standard techniques from dynamical systems theory will suffice. We have carried out this process for multiple models so far. Therefore, here we introduce the necessary notation, state the result, and then point to a paper that contains the details of the proof. 

First, we use the fact that $\bm L$ generates the semigroup $\bm S(\tau)$, so as to recast \eqref{Eq:Vector_pert_2} into the integral form
\begin{equation}\label{Eq:Duhamel}
	\Phi(\tau)=\mb S(\tau)\mb U(\mb v,T) + \int_{0}^{\tau}\mb S(\tau-s)\mb N(\Phi(s))ds.
\end{equation} 
Then, as $\bm S(\tau)$ decays exponentially on $\ker \mb P$, and $\mb N$ is locally Lipschitz continuous in $\mc H$, we employ a  fixed point argument to show the existence of global decaying solutions for small initial data. To deal with growth caused by the presence of $\rg \mb P$ in the initial data, we use a Lyapunov-Perron type of argument to suppress the growing mode by appropriately choosing the blowup time. To state the result, we need the following Banach space
\begin{equation*}
	\mc X := \{ \Phi \in C([0,\infty),\mc H) : 
	\| \Phi  \|_{\mc X} := \sup_{\tau>0}e^{\omega\tau}\|\Phi(\tau)\|_{\mc H} < \infty
	\}, 
\end{equation*}
where $\omega$ is from Proposition \ref{Prop:Pert_semigroup}. 
We also need the following definition
\begin{equation*}
	\mc X_\delta := \{ \Phi \in \mc X: \| \Phi \|_{\mc X} \leq \delta \}.
\end{equation*}
Now we formulate the central result of this section. For the proof, we point the reader to \cite[Section 8]{Glo22a}, Theorem 8.3  in particular.
\begin{theorem}\label{Thm:CoMain}
	There  exist $0 < \delta \ll 1$ and $N \gg 1$ such that the following holds. If
	\begin{equation}\label{Eq:v_smallness}
		{\bf v} \in \mc H, \quad \text{the components of {\bf v} are  real-valued}, \quad \text{and} \quad \| {\bf v} \|_{\mc H} \leq \tfrac{\delta}{N^2},
	\end{equation}
	then there exist $T \in [1-\frac{\delta}{N}, 1+\frac{\delta}{N}]$ and $ \Phi \in \mc X_{\delta}$ such that \eqref{Eq:Duhamel} holds for all $\tau \geq 0.$ 
\end{theorem}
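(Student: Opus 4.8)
The plan is to run a standard Lyapunov--Perron fixed point argument on the Duhamel formulation \eqref{Eq:Duhamel}, following the template already used in \cite[Section 8]{Glo22a}. The key structural ingredients are all in place: $\bm S(\tau)$ decays like $e^{-\omega\tau}$ on the stable subspace $\rg(\bm I - \bm P)$ by Proposition \ref{Prop:Decay_stable}; the range of $\bm P$ is one-dimensional, spanned by the explicit eigenfunction $\bm g$ in \eqref{Def:g}, with $\bm S(\tau)$ acting as $e^{\tau}$ there; and $\bm N$ is locally Lipschitz on $\mc H = \mc H^{\frac{n-3}{2},\frac{n}{2}-1}$ by Proposition \ref{Prop:Nonlin_est_N} (note $\frac{n-3}{2} > 1$ and $\frac{n}{2}-1 = \frac{n-3}{2} + \frac12 \geq \frac{n}{2}-1$ hold for $n \geq 7$, which is forced by Proposition \ref{Prop:S}), with $\bm N(0) = 0$ so that the quadratic estimate \eqref{Eq:Nonlin_est} gives $\|\bm N(\bm u)\| \lesssim \|\bm u\|^2$. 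The remaining issue is purely the presence of the unstable direction.

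First I would introduce the modified evolution in which the growing mode is subtracted off. Concretely, replace \eqref{Eq:Duhamel} by
\begin{equation*}
	\bm K(\Phi, \bm v, T)(\tau) := \bm S(\tau)\bm U(\bm v, T) + \int_0^\tau \bm S(\tau - s)\bm N(\Phi(s))\,ds - \bm P\!\left[\bm S(\tau)\bm U(\bm v,T) + \int_0^\infty e^{\tau - s}\bm N(\Phi(s))\,ds\right],
\end{equation*}
where the last term, written using the explicit action of $\bm S$ on $\rg \bm P$, removes exactly the part of the solution that would otherwise grow like $e^\tau$. One checks that on the space $\mc X_\delta$ (equipped with the weighted sup norm $\|\Phi\|_{\mc X} = \sup_{\tau > 0} e^{\omega\tau}\|\Phi(\tau)\|_{\mc H}$), $\bm K(\cdot, \bm v, T)$ maps $\mc X_\delta$ into itself and is a contraction, provided $\delta$ is small and $\|\bm v\|_{\mc H} \leq \delta/N^2$ with $N$ large: the linear term is controlled by $\|\bm U(\bm v, T)\| \lesssim \|\bm v\|_{\mc H}$ (using that $T$ is close to $1$ so the ``scaling difference'' piece of $\bm U(\bm v,T)$ in \eqref{Eq:SS_initial_data} is also $O(\|\bm v\|)$, a point that needs a short Sobolev-continuity lemma), and the Duhamel integral and the $\bm P$-correction are each $O(\delta^2)$ by the quadratic nonlinear estimate together with the exponential weights. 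The unique fixed point $\Phi = \Phi(\bm v, T) \in \mc X_\delta$ then solves \eqref{Eq:Duhamel} with the extra correction term still present.

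The heart of the argument is then to choose $T$ so that the correction term vanishes, i.e. so that the fixed point actually solves \eqref{Eq:Duhamel}. Define
\begin{equation*}
	F(\bm v, T) := \bm P\bm U(\bm v, T) + \int_0^\infty e^{-s}\bm N\big(\Phi(\bm v, T)(s)\big)\,ds \in \rg \bm P \cong \mathbb{C},
\end{equation*}
and observe that the correction term is $e^\tau F(\bm v, T)$, which is $0$ for all $\tau$ iff $F(\bm v, T) = 0$. Writing $\bm P\bm U(\bm v, T) = \mu(\bm v, T)\,\bm g$ for a scalar $\mu$, one shows that $T \mapsto \mu(\bm v, T)$ is continuous and, crucially, has a definite sign change on $[1 - \delta/N, 1 + \delta/N]$ of size comparable to $\delta/N$ as $T$ crosses $1$ (this is where the symmetry-induced mode $\la = 1$ and the particular form of the profile enter; it is essentially a computation that $\partial_T$ of the $T$-dependent part of $\bm U$ at $T=1$ has nonzero $\bm g$-component), while the integral term is uniformly $O(\delta^2) = o(\delta/N)$. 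An intermediate value argument (as in \cite[Section 8]{Glo22a}) then produces $T \in [1 - \delta/N, 1 + \delta/N]$ with $F(\bm v, T) = 0$, completing the proof. The main obstacle I anticipate is not any single estimate but the bookkeeping needed to verify that the $\bm g$-component of $\bm P\bm U(\bm v,T)$ genuinely dominates over the nonlinear integral in the relevant parameter regime, which is precisely what forces the hierarchy $\|\bm v\| \leq \delta/N^2 \ll \delta/N$; since all of this is carried out in detail in \cite[Theorem 8.3]{Glo22a} for an analogous model, I would simply adapt that argument rather than reproduce it.
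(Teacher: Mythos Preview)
Your outline is correct and matches the paper's approach exactly: the paper gives no proof but simply refers to \cite[Section 8, Theorem 8.3]{Glo22a}, and your sketch reproduces that Lyapunov--Perron scheme faithfully. One minor correction: the scaling-difference piece of $\bm U(\bm v,T)$ in \eqref{Eq:SS_initial_data} is $O(|T-1|) = O(\delta/N)$ rather than $O(\|\bm v\|)$ (it contains the $T$-rescaling of the fixed profile $\Psi_0$), but this still suffices for the self-map on $\mc X_\delta$ once $N$ is large, and is in fact precisely what produces the sign change in $\mu(\bm v,T)$ needed for the intermediate value step.
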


\subsection{Upgrade to classical solutions}\label{Sec:Upgrade_class}

In this section we establish persistence of regularity: if the initial data are smooth and rapidly decaying then the strong solution constructed in Theorem \ref{Thm:CoMain} is globally smooth, in both space and time. To do this, we analyze the corresponding strong solution to \eqref{Eq:Evol_equ}. More precisely, since $\mb V = \mb L - \mb L_0$ is bounded, the variation of constant formula
	\begin{equation*}
	\mb S(\tau)= \mb S_0(\tau) + \int_{0}^{\tau}\mb S_0(\tau-s)\mb V \mb S(s)ds
\end{equation*}
holds, by means of which we then get from \eqref{Eq:Duhamel} that $\tau \mapsto \Psi(\tau)=\Psi_0 + \Phi(\tau)$ is a global strong $\mc H$-solution to  
\begin{equation}\label{Eq:Duhamel0}
	\Psi(\tau)=\mb S_0(\tau)\mb U_0(T) + \int_{0}^{\tau}\mb S_0(\tau-s)\mb N_0(\Psi(s))ds.
\end{equation}
 In what follows, we show that under regularity and rapid decay assumptions on $\mb v$, the corresponding solution $\Psi(\tau)$, which is a priori only inside $\mc H^{\frac{n-3}{2},\frac{n}{2}-1}$, in fact belongs to $\mc H^{\frac{n-3}{2},k}$ for all $k \geq \frac{n}{2}-1$ . For this, we need to establish several nonlinear estimates first.
\begin{lemma}\label{Lem:Nonlin_est_3}
	Let $n \geq 5$ and $k \in \mathbb{N}, k \geq \lfloor \frac{n}{2} \rfloor$. Then
	\begin{equation}\label{Eq:Nlin_est_1}
		\| u^2 \|_{\dot{H}^{\lfloor \frac{n}{2} \rfloor-1}(\R^n)} \lesssim \| u \|_{ \dot{H}^{\frac{n}{2}-1}(\R^n) } \| u \|_{ \dot{H}^{\lfloor \frac{n}{2} \rfloor}(\R^n) }
	\end{equation}
and
\begin{equation}\label{Eq:Nlin_est_2}
	\|u^2 \|_{\dot{H}^{k}(\R^n)} \lesssim \| u \|_{ \dot{H}^{k}(\R^n) } \| u \|_{\dot{H}^{\frac{n-3}{2}} \cap \dot{H}^{k+1}(\R^n) }
\end{equation}
for all $u \in C^\infty_c(\R^n).$
\end{lemma}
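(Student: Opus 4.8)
\textbf{Proof plan for Lemma \ref{Lem:Nonlin_est_3}.}

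The plan is to treat the two estimates \eqref{Eq:Nlin_est_1} and \eqref{Eq:Nlin_est_2} by the same scheme that was used in the proofs of Lemmas \ref{Lem:Nonlin_est_1} and \ref{Lem:Nonlin_est_2}: reduce the homogeneous Sobolev norm of the product $u^2$ to an $L^2$-norm of a sum of terms of the form $\partial^{\alpha_1}u\,\partial^{\alpha_2}u$ via the derivative characterization \eqref{Eq:Norms_equiv} (the exponents $\lfloor n/2\rfloor-1$, $k$ are integers, so this is available), and then split each such term by H\"older, placing one factor in $L^\infty$ after a weighted Sobolev embedding \eqref{Eq:Sobolev_inf} and the other in $L^2$, possibly with a weight absorbed by Hardy's inequality. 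Since $u$ is radial here, \eqref{Eq:Sobolev_inf} applies.

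For \eqref{Eq:Nlin_est_1}: writing $m=\lfloor n/2\rfloor-1$, I would bound $\|u^2\|_{\dot H^{m}}$ by $\sum_{|\alpha_1|+|\alpha_2|=m}\|\partial^{\alpha_1}u\,\partial^{\alpha_2}u\|_{L^2}$, and assume WLOG $|\alpha_1|\le|\alpha_2|$, so $|\alpha_1|\le \lfloor m/2\rfloor < n/2$ with room to spare, while $|\alpha_2|\le m=\lfloor n/2\rfloor-1$. The factor $\partial^{\alpha_1}u$ goes into $L^\infty$ with the weight $|\cdot|^{n/2-1-|\alpha_1|}$ via \eqref{Eq:Sobolev_inf}, which is controlled by $\|u\|_{\dot H^{n/2-1}}$ exactly when $|\alpha_1|+(n/2-1)$ ... more precisely \eqref{Eq:Sobolev_inf} with $s=n/2-1-|\alpha_1|$ gives $\||\cdot|^{|\alpha_1|+1}\partial^{\alpha_1}u\|_{L^\infty}\lesssim\|u\|_{\dot H^{n/2-1}}$ provided $1/2<n/2-1-|\alpha_1|<n/2$, i.e. $|\alpha_1|<(n-3)/2$, which holds since $|\alpha_1|\le\lfloor m/2\rfloor$. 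The remaining factor is then $\||\cdot|^{-|\alpha_1|-1}\partial^{\alpha_2}u\|_{L^2}$, which by Hardy's inequality (valid since $|\alpha_1|+1\le m+1=\lfloor n/2\rfloor<n/2$ when $n$ is odd, and $=n/2$ requires a touch of care when $n$ is even) is bounded by $\|u\|_{\dot H^{|\alpha_2|+|\alpha_1|+1}}=\|u\|_{\dot H^{m+1}}=\|u\|_{\dot H^{\lfloor n/2\rfloor}}$. For the borderline Hardy weight when $n$ is even and $|\alpha_1|+1=n/2$, one reorganizes slightly (e.g. move half a derivative, or exploit that then $|\alpha_2|$ is large enough to place in a lower-$L^p$ Sobolev embedding instead), as in Lemma \ref{Lem:Nonlin_est_2}.

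For \eqref{Eq:Nlin_est_2}: here $k\ge\lfloor n/2\rfloor$ is large, so I would split into two regimes depending on $\min(|\alpha_1|,|\alpha_2|)$. Writing $|\alpha_1|\le|\alpha_2|$ with $|\alpha_1|+|\alpha_2|=k$: if $|\alpha_2|\ge(n-3)/2$ (which is the ``generic'' case, and always holds when $|\alpha_1|\le(n-3)/2$) put $\partial^{\alpha_2}u$ in $L^2$ — controlled by $\|u\|_{\dot H^{|\alpha_2|}}\le\|u\|_{\dot H^k}$ since $|\alpha_2|\le k$ — and $\partial^{\alpha_1}u$ in $L^\infty$ with weight $|\cdot|^{n/2-1-|\alpha_1|}$... but now the weight must vanish (exponent $\le 0$) for boundedness since there is no compensating negative weight; this forces $|\alpha_1|\ge n/2-1$, i.e. we are in the case $|\alpha_1|,|\alpha_2|\ge n/2-1\ge(n-3)/2$, both placed via \eqref{Eq:Sobolev_inf} with vanishing-or-negative weights — one in $L^\infty$ (weight $|\cdot|^{0}$, bounded by $\|u\|_{\dot H^{|\alpha_1|}}$... wait, need $s<n/2$, so use $\|u\|_{\dot H^{|\alpha_1|+\varepsilon}}$; instead use the integer embedding $\dot W^{|\alpha_1|,q}\hookrightarrow L^\infty$ directly, controlled by $\dot H^{k}$ when $k$ large enough, or cleanly: bound $\|\partial^{\alpha_1}u\|_{L^\infty}$ by $\|u\|_{\dot H^{(n-3)/2}\cap \dot H^{k+1}}$ via interpolating \eqref{Eq:Sobolev_inf} at $s=(n-3)/2-|\alpha_1|$... ) — and the other in $L^2$. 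The cleanest organization: when $|\alpha_2|<(n-3)/2$ (so $k<n-3$, only finitely many $k$) place both in $L^\infty$ with positive weights summing correctly and the ``$L^2$ slot'' carries $u$ itself at low regularity, bounded by $\|u\|_{\dot H^{(n-3)/2}}$; when $|\alpha_2|\ge(n-3)/2$ place $\partial^{\alpha_2}u$ in $L^2$ (cost $\|u\|_{\dot H^{|\alpha_2|}}$, $\le\|u\|_{\dot H^k}$ if $|\alpha_2|\le k$, and if $|\alpha_2|=k$ then $|\alpha_1|=0$ and we need $\|u\|_{L^\infty}\lesssim\|u\|_{\dot H^{(n-3)/2}\cap\dot H^{k+1}}$, which holds) and $\partial^{\alpha_1}u$ in $L^\infty$ bounded via \eqref{Eq:Sobolev_inf} by $\|u\|_{\dot H^{|\alpha_1|+s}}$ for suitable $s$ in $(1/2,n/2)$, interpolated between the two norms in the intersection. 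Then apply \eqref{Eq:Norms_equiv} to reassemble, giving \eqref{Eq:Nlin_est_2}.

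\textbf{Main obstacle.} The scheme is routine; the only real friction is bookkeeping at the borderline cases where a Hardy weight exponent equals $n/2$ (possible only for even $n$) or where the $L^\infty$-embedding \eqref{Eq:Sobolev_inf} is asked to run at $s=1/2$ or $s=n/2$ exactly — both excluded. I expect the bulk of the work to be verifying that in \emph{every} distribution of derivatives $(\alpha_1,\alpha_2)$ one can choose the split so that all the side conditions ($1/2<s<n/2$ in \eqref{Eq:Sobolev_inf}, $<n/2$ in Hardy, correct scaling) hold simultaneously, exactly as in Lemma \ref{Lem:Nonlin_est_2}, and in displaying that \eqref{Eq:Nlin_est_2} genuinely needs the $\dot H^{k+1}$ endpoint (it enters only through the $|\alpha_1|=0$, $|\alpha_2|=k$ term, where $\|u\|_{L^\infty}$ must be controlled and $\dot H^{(n-3)/2}$ alone does not suffice when paired against $\dot H^k$ in $L^2$ at the same scaling).
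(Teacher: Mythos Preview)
Your scheme has a real gap: the lemma is stated for \emph{all} $u\in C^\infty_c(\R^n)$, not for radial $u$, so the weighted radial embedding \eqref{Eq:Sobolev_inf} is not available. (Contrast Lemma~\ref{Lem:Nonlin_est_2}, which does carry the radial subscript; this one does not.) Both of your arguments lean on \eqref{Eq:Sobolev_inf} at the outset, so as written neither goes through for the statement you are asked to prove.

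The paper's proof is considerably shorter and avoids radiality entirely. For \eqref{Eq:Nlin_est_1} it uses only H\"older and the critical Sobolev embedding into finite $L^p$ spaces (never $L^\infty$): with $|\alpha|+|\beta|=\lfloor n/2\rfloor-1$ and $|\alpha|\le|\beta|$, take $\tfrac{n}{p_1}=1+|\alpha|$ and $\tfrac{n}{p_2}=\tfrac{n}{2}-\lfloor\tfrac{n}{2}\rfloor+|\beta|$; then $\|\partial^\alpha u\|_{L^{p_1}}\lesssim\|u\|_{\dot H^{n/2-1}}$ and $\|\partial^\beta u\|_{L^{p_2}}\lesssim\|u\|_{\dot H^{\lfloor n/2\rfloor}}$ directly by Sobolev, with no Hardy weight and hence none of the even-$n$ borderline issues you flag. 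For \eqref{Eq:Nlin_est_2} the paper does not expand by Leibniz at all: it quotes the product estimate $\|u^2\|_{\dot H^k}\lesssim\|u\|_{\dot H^k}\|u\|_{L^\infty}$ and then controls $\|u\|_{L^\infty}$ by $\|u\|_{\dot H^{(n-3)/2}\cap\dot H^{k+1}}$, which holds for general functions since $(n-3)/2<\tfrac{n}{2}<k+1$. Your Leibniz-and-casework plan, even after restricting to radial $u$, would eventually recover the same bounds, but the paper's route bypasses the case distinctions and the endpoint bookkeeping you identify as the main obstacle.
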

\begin{proof}
	Estimate \eqref{Eq:Nlin_est_1} follows from the fact that for any pair of multi-indices $\alpha,\beta$ with $|\alpha|+|\beta|=\lfloor \frac{n}{2} \rfloor-1$ and $|\alpha| \leq |\beta|$, H\"older's inequality and the critical Sobolev embedding imply that
	\begin{equation*}
		\| \partial^{\alpha}u \, \partial^\beta u  \|_{L^2(\R^n)} \leq \| \partial^{\alpha}u  \|_{L^{p_1}(\R^n)} \|  \partial^\beta u  \|_{L^{p_2}(\R^n)} \lesssim \| u \|_{ \dot{H}^{\frac{n}{2}-1}(\R^n) } \| u \|_{\dot{H}^{\lfloor \frac{n}{2} \rfloor}(\R^n) },
	\end{equation*}
for all $u \in C^\infty_c(\R^n)$, where $p_1,p_2$ are defined by $\frac{n}{p_1}=1+|\alpha|$ and $\frac{n}{p_2}=\frac{n}{2}-\lfloor\frac{n}{2}\rfloor +|\beta|$.
Estimate \eqref{Eq:Nlin_est_2} follows from the combination of the estimate
\begin{equation*}
	\| u^2 \|_{\dot{H}^k(\R^n)} \lesssim \| u \|_{\dot{H}^k(\R^n)} \| u \|_{L^\infty(\R^n)}
\end{equation*}
(see, e.g., \cite[Proposition 3.7]{Tay97}) and the $L^\infty$-embedding
\begin{equation*}
	 \| u \|_{L^\infty(\R^n)} \lesssim \| u \|_{\dot{H}^{\frac{n-3}{2}} \cap \dot{H}^{k+1}(\R^n) }.
\end{equation*}
\end{proof}
Now we establish the necessary nonlinear estimates for the operator $\mb N_0$.

\begin{lemma}
	Let $n \geq 7$.	Define $F(x):=x+x^2$ and let $k \in \mathbb{N}, k \geq \lfloor \frac{n}{2} \rfloor$. Then
	\begin{equation}\label{Eq:Nlin_est_3}
		\| \bm N_0(\bm u) \|_{\mc H^{\frac{n-3}{2},\lfloor \frac{n}{2} \rfloor}} \lesssim  F \big( \| \bm u \|_{\mc H} \big) \,  \| \bm u \|_{\mc H^{\frac{n-3}{2},\lfloor \frac{n}{2} \rfloor}}
	\end{equation}
and 
	\begin{equation}\label{Eq:Nlin_est_4}
	\| \bm N_0(\bm u) \|_{\mc H^{\frac{n-3}{2},k+1}} \lesssim  F \Big( \| \bm u \|_{\mc H^{\frac{n-3}{2},k}} \Big) \,  \| \bm u \|_{\mc H^{\frac{n-3}{2},k+1}}
\end{equation}	
for all $\bm u \in C_{c,r}^\infty(\R^n) \times C_{c,r}^\infty(\R^n)$.
\end{lemma}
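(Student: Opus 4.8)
The plan is to reduce both claims to product estimates for the single function $u_1$, the first component of $\bm u$. Since $\bm N_0(\bm u)=(0,\,N_0(\cdot,u_1))$ with $N_0(\xi,u_1)=(n-4)\big(3u_1^2-|\xi|^2u_1^3\big)$, we have $\|\bm N_0(\bm u)\|_{\mc H^{s_1,s_2}}=\|3u_1^2-|\xi|^2u_1^3\|_{\dot H^{s_1-1}\cap\hsob{s_2-1}}$, so \eqref{Eq:Nlin_est_3} is the case $(s_1,s_2)=(\tfrac{n-3}{2},\lfloor\tfrac n2\rfloor)$ and \eqref{Eq:Nlin_est_4} is the case $(s_1,s_2)=(\tfrac{n-3}{2},k+1)$; in both cases the upper exponent $s_2-1\in\{\lfloor\tfrac n2\rfloor-1,\,k\}$ is an \emph{integer}. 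I would estimate the quadratic and cubic parts separately, and for each split the intersection norm into the lower exponent $s_1-1=\tfrac{n-5}{2}$ and the upper (integer) exponent $s_2-1$. The only soft ingredients used at the assembly stage are the embeddings between intersection spaces that come from interpolation of homogeneous Sobolev norms — such as $\mc H\hookrightarrow\mc H^{\frac{n-3}{2},\lfloor n/2\rfloor}$, $\mc H^{\frac{n-3}{2},k+1}\hookrightarrow\mc H^{\frac{n-3}{2},k}$, $\|u_1\|_{\hsob{n/2-1}}\le\|\bm u\|_{\mc H}$, $\|u_1\|_{\hsob{k}}\le\|\bm u\|_{\mc H^{\frac{n-3}{2},k}}$ — together with the elementary inequalities $xy\le F(x)z$ and $x^3\le F(x)z$, valid for $0\le x\le z$.

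The easy pieces are already in hand. For the lower exponent $\tfrac{n-5}{2}$, applying Lemma~\ref{Lem:Nonlin_est_1} (with $f$ constant, $s_1=\tfrac{n-3}{2}$, $s_2=\tfrac n2-1$) to $u_1^2$ and Lemma~\ref{Lem:Nonlin_est_2} (with $s_1=s_2=\tfrac{n-3}{2}$) to $|\xi|^2u_1^3$ gives $\|N_0(\cdot,u_1)\|_{\hsob{(n-5)/2}}\lesssim\|u_1\|_{\dot H^{(n-3)/2}\cap\hsob{n/2-1}}^2+\|u_1\|_{\hsob{(n-3)/2}}^3\lesssim F(\|\bm u\|_{\mc H})\,\|\bm u\|_{\mc H}$, which after the embeddings above is dominated by the right-hand side of \eqref{Eq:Nlin_est_3} and of \eqref{Eq:Nlin_est_4}. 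For the upper exponent, the quadratic term is exactly Lemma~\ref{Lem:Nonlin_est_3}: \eqref{Eq:Nlin_est_1} gives $\|u_1^2\|_{\hsob{\lfloor n/2\rfloor-1}}\lesssim\|u_1\|_{\hsob{n/2-1}}\|u_1\|_{\hsob{\lfloor n/2\rfloor}}$ and \eqref{Eq:Nlin_est_2} gives $\|u_1^2\|_{\hsob{k}}\lesssim\|u_1\|_{\hsob{k}}\|u_1\|_{\dot H^{(n-3)/2}\cap\hsob{k+1}}$; in both, the first factor is already controlled by the relevant ``low'' norm ($\tfrac n2-1$, resp.\ $k$, being the top exponent present in $\mc H$, resp.\ in $\mc H^{\frac{n-3}{2},k}$), so these bounds are of the required tame form.

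The one genuinely delicate point — and the step I expect to be the main obstacle — is the cubic term $|\xi|^2u_1^3$ at the upper exponent. A direct appeal to Lemma~\ref{Lem:Nonlin_est_2} only yields $\||\xi|^2u_1^3\|_{\hsob{s_2-1}}\lesssim\|u_1\|_{\dot H^{(n-3)/2}\cap\hsob{s_2}}^3$, which is too lossy: for large data this cubic top-norm cannot be absorbed into $F(\text{low norm})\cdot(\text{top norm})$. The fix is to invoke a \emph{tame} trilinear bound that keeps exactly one factor at the high exponent while the other two sit at the base exponent $\tfrac{n-3}{2}$ — and such a bound is already produced, as intermediate steps, inside the proof of Lemma~\ref{Lem:Nonlin_est_2}, namely estimate \eqref{Eq:low_k} for integer top exponents $\le\lceil\tfrac n2\rceil-2$ and estimate \eqref{Eq:high_k} for integer top exponents $\ge\lceil\tfrac n2\rceil-1$. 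Concretely: if $s_2-1\le\lceil\tfrac n2\rceil-2$ — which within \eqref{Eq:Nlin_est_3} happens precisely for $n$ odd, where $s_2-1=\tfrac{n-3}{2}$ and $s_2=\lfloor\tfrac n2\rfloor$ — I would apply \eqref{Eq:low_k} with the distinguished slot of order $m=s_2-1$ (so $m+1=s_2$), obtaining $\||\xi|^2u_1^3\|_{\hsob{s_2-1}}\lesssim\|u_1\|_{\hsob{s_2}}\|u_1\|_{\hsob{(n-3)/2}}^2$; if instead $s_2-1\ge\lceil\tfrac n2\rceil-1$ — the case of \eqref{Eq:Nlin_est_4}, and of \eqref{Eq:Nlin_est_3} for $n$ even — I would apply \eqref{Eq:high_k} to get $\||\xi|^2u_1^3\|_{\hsob{s_2-1}}\lesssim\|u_1\|_{\dot H^{(n-3)/2}\cap\hsob{s_2-1}}^3$, where now $\dot H^{(n-3)/2}\cap\hsob{s_2-1}$ coincides with the first-component space of $\mc H$ (when $s_2-1=\tfrac n2-1$) or of $\mc H^{\frac{n-3}{2},k}$ (when $s_2-1=k$). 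Assembling the four contributions — quadratic and cubic, at the lower and at the upper exponent — and invoking the embeddings and elementary inequalities from the first paragraph then yields exactly the right-hand sides of \eqref{Eq:Nlin_est_3} and \eqref{Eq:Nlin_est_4}. Beyond routine index bookkeeping, the only real care required is tracking the parity of $n$, since that decides whether the top exponent $\lfloor\tfrac n2\rfloor-1$ falls into the ``low'' or the ``high'' regime of the trilinear estimate.
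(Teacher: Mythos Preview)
Your proposal is correct and follows essentially the same route as the paper: the cubic term at the top integer exponent is handled by the tame trilinear bounds \eqref{Eq:low_k} (for $n$ odd in \eqref{Eq:Nlin_est_3}) and \eqref{Eq:high_k} (for $n$ even in \eqref{Eq:Nlin_est_3} and for all $n$ in \eqref{Eq:Nlin_est_4}), while the quadratic term and the lower exponent are covered by Lemmas~\ref{Lem:Nonlin_est_1}, \ref{Lem:Nonlin_est_2}, \ref{Lem:Nonlin_est_3}. One small slip: the embedding you write as $\mc H\hookrightarrow\mc H^{\frac{n-3}{2},\lfloor n/2\rfloor}$ goes the other way, $\mc H^{\frac{n-3}{2},\lfloor n/2\rfloor}\hookrightarrow\mc H$, but you use it in the correct direction in the argument.
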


\begin{proof}
	To establish \eqref{Eq:Nlin_est_3}, we first observe that from \eqref{Eq:low_k}, or \eqref{Eq:high_k}, depending on the parity of $n$, we have that
		\begin{equation*}
		\||\cdot|^2 u^3 \|_{\dot{H}^{\lfloor \frac{n}{2} \rfloor-1}(\R^n)} \lesssim \| u \|^2_{\dot{H}^{\frac{n-3}{2}} \cap \dot{H}^{\frac{n}{2}-1}(\R^n) } \| u \|_{\dot{H}^{\frac{n-3}{2}} \cap \dot{H}^{\lfloor \frac{n}{2} \rfloor}(\R^n) }
	\end{equation*}
for all $u \in C^\infty_{c,r}(\R^n).$ Then, from this estimate and Lemmas \ref{Lem:Nonlin_est_1}, \ref{Lem:Nonlin_est_2}, \ref{Lem:Nonlin_est_3}, we get \eqref{Eq:Nlin_est_3}. The estimate \eqref{Eq:Nlin_est_4} follows from the same three lemmas together with \eqref{Eq:high_k}.
\end{proof}

We now prove the main result of this section.

\begin{proposition}\label{Prop:Upgrade_to_class}
	If $\bm v$ from Theorem \ref{Thm:CoMain} is such that its components belong to the Schwartz class $\mc S(\R^n)$, then for the corresponding solution $\Phi$ to \eqref{Eq:Vector_pert_2} we have that the map $(\tau,\xi) \mapsto \Psi(\tau)(\xi) = \Psi_0(\xi)+ \Phi(\tau)(\xi)$ belongs to	
	 $C^\infty([0,\infty)\times \R^n) \times C^\infty([0,\infty)\times \R^n)$ and its first component satisfies \eqref{Eq:NLW_sim_var} classically on $[0,\infty)\times \R^n$. 
\end{proposition}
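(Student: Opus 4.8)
The plan is to upgrade regularity in three moves: a Sobolev bootstrap in similarity variables, a transfer to physical coordinates, and a classical bootstrap of the wave equation there. \emph{Step 1: higher Sobolev regularity.} I would first show that the strong solution $\Psi$ from Theorem~\ref{Thm:CoMain} satisfies $\Psi(\tau)\in\mc H^{\frac{n-3}{2},k}$ for every integer $k\geq\lfloor\frac n2\rfloor$ and every $\tau\geq 0$, with bounds uniform on compact $\tau$-intervals. This is a persistence-of-regularity argument resting on three facts already available. First, by Proposition~\ref{Prop:Free_semigroup} with $s_1=\frac{n-3}{2}$ (admissible since $1<\frac{n-3}{2}<\frac n2$ for $n\geq 7$) and $s_2=k$ arbitrary, the free semigroup $\bm S_0(\tau)$ is bounded on every $\mc H^{\frac{n-3}{2},k}$ with the $k$-independent bound $\|\bm S_0(\tau)\|\leq e^{-\tau/2}$, the exponent $\frac n2-2-s_1$ in \eqref{Eq:S_0_decay} depending on $s_1$ only. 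Second, the datum $\bm U_0(T)$ lies in $\mc H^{\frac{n-3}{2},k}$ for all $k$: the profile $\Psi_0$ is smooth with $|\partial^\alpha\phi_j(\xi)|\lesssim\langle\xi\rangle^{-2-|\alpha|}$, hence in $\dot H^s_r(\R^n)$ for every $s>\frac n2-2$ (with cutoffs converging in the relevant norms), and $\bm v$ is Schwartz. Third, the tame estimates \eqref{Eq:Nlin_est_3}–\eqref{Eq:Nlin_est_4} extend, by multilinearity of $\bm N_0$ and density, to $\bm u=\Psi(\tau)$. Inducting on $k$ (using \eqref{Eq:Nlin_est_3} to reach the first integer exponent above $\frac n2-1$, then \eqref{Eq:Nlin_est_4}), I construct at each level the local $\mc H^{\frac{n-3}{2},k}$-solution of \eqref{Eq:Duhamel0} by contraction, identify it with $\Psi$ by uniqueness in $\mc H$, and close a Grönwall inequality on \eqref{Eq:Duhamel0}; the Lipschitz constant $F(\|\Psi_0\|_{\mc H}+\delta)$ is not small, but it only produces a finite (at worst exponentially $\tau$-growing) bound, which suffices.

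\emph{Step 2: passage to physical coordinates.} Inverting \eqref{Def:Simil_var}–\eqref{Def:psi} I set $v(t,x):=T^2e^{-2\tau}\psi_1(\tau,\xi)$ with $\tau=\ln\frac{T}{T-t}$, $\xi=\frac{x}{T-t}$. Since $\Psi$ is a mild solution of \eqref{Eq:Evol_equ} it is a distributional solution of \eqref{Eq:NLW_sim_var} (eliminate $\psi_2$ via \eqref{Def:psi_1}), and transporting this by the smooth change of variables makes $v$ a distributional solution of the Cauchy problem \eqref{Eq:NLW} with the smooth radial data inherited from \eqref{Eq:SS_initial_data}. Moreover Step~1 and the Sobolev embedding $\mc H^{\frac{n-3}{2},k}\hookrightarrow H^k_{\mathrm{loc}}(\R^n)\times H^{k-1}_{\mathrm{loc}}(\R^n)$ give, for each fixed $t$, $v(t,\cdot),\partial_t v(t,\cdot)\in H^k_{\mathrm{loc}}(\R^n)$ for all $k$ (the latter via $\psi_2$), with $t\mapsto(v(t,\cdot),\partial_t v(t,\cdot))$ continuous into $H^k_{\mathrm{loc}}\times H^{k-1}_{\mathrm{loc}}$ and locally bounded on $[0,T)$.

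\emph{Step 3: classical regularity and conclusion.} Fix $t_0<T$ and $R>0$. Using finite speed of propagation, replace $(v_0,v_1)$ outside $B_{R+t_0}$ by smooth compactly supported data $(\tilde v_0,\tilde v_1)\in\bigcap_k\big(H^k\times H^{k-1}\big)(\R^n)$. Standard local well-posedness of \eqref{Eq:NLW} in high Sobolev spaces together with persistence of regularity—bootstrapping $\partial_t^2\tilde v=\Delta\tilde v+(n-4)\tilde v^2(3-|x|^2\tilde v)$, using that $H^k(\R^n)$ is an algebra for $k>\frac n2$ and $|x|^2$ a smooth multiplier there—yields a maximal solution $\tilde v\in C^\infty([0,T^{\ast})\times\R^n)$. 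By finite speed of propagation and uniqueness in $C([0,t_0],H^k)\cap C^1([0,t_0],H^{k-1})$, $\tilde v=v$ on the truncated cone over $B_R$; since $v$ exists on $[0,t_0]$ this forces $T^{\ast}>t_0$, so $v\in C^\infty([0,t_0]\times B_R)$. As $t_0<T$ and $R$ are arbitrary, $v\in C^\infty(S_T)$ is a classical solution of \eqref{Eq:NLW}. Finally $(\tau,\xi)\mapsto(T-Te^{-\tau},Te^{-\tau}\xi)$ is a $C^\infty$-diffeomorphism of $[0,\infty)\times\R^n$ onto $S_T$ with smooth inverse, so $\psi_1(\tau,\xi)=T^2e^{-2\tau}v(T-Te^{-\tau},Te^{-\tau}\xi)\in C^\infty([0,\infty)\times\R^n)$ and solves \eqref{Eq:NLW_sim_var} classically; since $\psi_2=(\partial_\tau+\Lambda+2)\psi_1$, also $\psi_2\in C^\infty$, whence $(\tau,\xi)\mapsto\Psi(\tau)(\xi)=\Psi_0(\xi)+\Phi(\tau)(\xi)$ belongs to $C^\infty([0,\infty)\times\R^n)\times C^\infty([0,\infty)\times\R^n)$.

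\emph{Main obstacle.} The real work is Step~1: this is where the new estimates \eqref{Eq:Nlin_est_3}–\eqref{Eq:Nlin_est_4} enter and where one must propagate control globally in $\tau$ through a nonlinearity carrying the slowly decaying profile $\Psi_0$. A related subtlety shapes the rest of the argument: because $\Psi_0$ decays only like $\langle\cdot\rangle^{-2}$ it does not lie in $\mc D(\bm L_0)$, so $\Psi$ is not a classical solution in the semigroup sense and time regularity cannot be extracted abstractly—hence the detour through physical coordinates, where the equation is a plain semilinear wave equation and the usual bootstrap closes.
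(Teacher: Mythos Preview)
Your Step~1 (the Sobolev bootstrap via \eqref{Eq:Nlin_est_3}--\eqref{Eq:Nlin_est_4} and Gr\"onwall on \eqref{Eq:Duhamel0}) matches the paper's argument for spatial regularity exactly. For \emph{time} regularity the routes diverge: you detour through physical coordinates and classical wave-equation theory, whereas the paper stays entirely in similarity variables. It observes that $\bm U_0(T)\in\mc D(\bm L_0)$ relative to $\mc H^{\frac{n-3}{2},k_0}$ for fixed $k_0>\frac n2+1$, invokes the standard fact (\cite[Prop.~4.3.9]{CazHar98}) that a mild solution with datum in the generator's domain and locally Lipschitz nonlinearity is a classical semigroup solution, so $\Psi\in C^1([0,\infty),\mc H^{\frac{n-3}{2},k_0})$ with $\Psi'=\bm L_0\Psi+\bm N_0(\Psi)$ pointwise in $\tau$; since additionally $\Psi(\tau)\in\mc H^{\frac{n-3}{2},k}$ for all $k$, the generator acts classically, and joint smoothness follows from Sobolev embedding and Schwarz's theorem in a few lines. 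Your stated obstacle---that $\Psi_0\notin\mc D(\bm L_0)$ because it decays only like $\langle\cdot\rangle^{-2}$---is exactly what the paper contests. You are right that the explicit sufficient decay condition \eqref{Eq:Decay_assumpt} with $s_1=\frac{n-3}{2}$ fails (it would require $\langle\cdot\rangle^{-5/2}$), but that hypothesis is only sufficient; the graph-norm approximation by cutoffs behind \cite[Lemma~4.5]{Glo22a} goes through more generally since $\widetilde{\bm L}_0\bm U_0(T)$ still lands in $\mc H^{\frac{n-3}{2},k_0}$. So the detour is not forced. Your route through Steps~2--3 is nonetheless a legitimate alternative; the one slip is calling $|x|^2$ a multiplier on $H^k(\R^n)$---it is unbounded at infinity---but this is easily repaired by cutting off the nonlinearity too (replace $|x|^2$ by $\chi(x)|x|^2$ with $\chi\equiv1$ on $B_{R+2t_0}$) and invoking finite speed of propagation once more to identify the modified evolution with $v$ on the truncated cone.
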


\begin{proof}
	 First, we show that, under the assumptions of the proposition, $\Psi(\tau)$ belongs to $C^\infty(\R^n) \times C^\infty(\R^n)$ for all $\tau \geq 0$. We start by noting that  $\mb U_0(T)$ belongs to $\mc H^{\frac{n-3}{2},k}$ for every integer $k \geq \frac{n}{2}-1$. Furthermore, from Lemmas \ref{Lem:Nonlin_est_1} and \ref{Lem:Nonlin_est_2} we get that the operator $\mb N_0$ is locally Lipschitz continuous on $\mc H^{s_1,s_2}$, whenever $1 \leq s_1 \leq \frac{n-3}{2}$ and $\frac{n}{2}-1 \leq s_2$. Therefore, a straightforward fixed point argument yields for every $k \geq \frac{n}{2}-1$ a local strong $\mc H^{\frac{n-3}{2},k}$-solution to \eqref{Eq:Duhamel0}. This solution, due to uniqueness, must coincide on its interval of existence with the global $\mc H$-solution $\Psi$. Furthermore, the existence of local solutions is of the subcritical type in $\mc H^{\frac{n-3}{2},k}$, i.e., the existence time depends only the norm of the initial data. Therefore, we can use the estimate \eqref{Eq:Nlin_est_3} and Gronwall's inequality to get from \eqref{Eq:Duhamel0} that a solution that is global in $\mc H$ is also global in $\mc H^{\frac{n-3}{2},\lfloor\frac{n}{2}\rfloor}$. Consequently, by \eqref{Eq:Nlin_est_4} we inductively get that for every integer $k \geq \lfloor\frac{n}{2}\rfloor$, $\Psi$ is a global $\mc H^{\frac{n-3}{2},k}$-solution to \eqref{Eq:Duhamel0}. Then, by Sobolev embedding we get that $\Psi(\tau)$ belongs to $C^\infty(\R^n) \times C^\infty(\R^n)$ for all $\tau \geq 0$; see \cite[Lemma 4.4]{Glo22a}. 
	
	Now, we prove regularity in $\tau$.
	Fix an integer $k_0 > \frac{n}{2}+1$. We note that, according to the proof of Proposition \ref{Prop:Free_semigroup}, we have that $\mb U_0(T) \in \mc D(\mb L_0)$ relative to $\mc H^{\frac{n-3}{2},k_0}$. Since, in addition to this, the operator $\mb N_0$ is locally Lipschitz continuous in $\mc H^{\frac{n-3}{2},k_0}$, we have that $\Psi \in C^1([0,\infty),\mc H^{\frac{n-3}{2},k_0})$, $\Psi(\tau) \in \mc D(\mb L_0)$ for all $\tau \geq 0$, and furthermore
	\begin{equation*}
		 	\Psi'(\tau) = \bm L_0 \Psi(\tau) + \bm N_0(\Psi(\tau))
	\end{equation*}
	 holds in the classical (operator) sense in $\mc H^{\frac{n-3}{2},k_0}$; see, e.g., \cite[p.~60, Proposition 4.3.9]{CazHar98}. Additionally, since $\Psi(\tau) \in \mc D(\mb L_0)$ and $\Psi(\tau) \in \mc H^{\frac{n-3}{2},k}$ for every integer $k > \frac{n}{2}+1$, we have that $\mb L_0$ acts classically on $\Psi(\tau)$, i.e., $\mb L_0 \Psi(\tau) = \widetilde{\mb L}_0 \Psi(\tau)$. Consequently, \eqref{Eq:Evol_equ} holds. As $\mc H^{\frac{n-3}{2},k_0}$ is embedded in $L^\infty(\R^n) \times L^\infty(\R^n)$ we conclude that the $\tau$-derivative holds pointwise. Then, by Schwarz's theorem we infer that mixed derivatives of all orders in $\tau$ and $\xi$ exist, and the claim of the proposition follows.

\end{proof}

\section{Back to physical coordinates: Proof of Theorem \ref{Thm:Main}}\label{Sec:main_proof}

Let $\varepsilon:=\frac{\delta}{N}$, where $\delta,N$ are from Theorem \ref{Thm:CoMain}. To deduce Theorem \ref{Thm:Main} from Theorem \ref{Thm:CoMain}, we start with showing that smallness of the perturbation $(\varphi_0,\varphi_1)$ implies smallness of $\mb v$, in their respective norms. For this, we use \cite[Proposition A.5]{Glo22b}, which implies that
\begin{equation}\label{Eq:Equiv_norms}
	\|u(|\cdot|) \sigma   \|_{\dot{H}^k(\R^d)} \simeq \| u(|\cdot|) \|_{\dot{H}^k(\R^{d+2})}	
\end{equation}
for all $u(|\cdot|) \in C^{\infty}_{c,r}(\R^d)$. By \eqref{Eq:Equiv_norms}, from \eqref{Def:InitCond_v} we infer that there is a large enough $M>0$ for which \eqref{Eq:Data_smallness} implies $\| \mb v \|_{\mc H} \leq \frac{\delta}{N^2}$. Then, by Theorem \ref{Thm:CoMain} there exists $T \in [1-\varepsilon,1+\varepsilon]$ and a global strong $\mc H$-solution $\Phi$ to \eqref{Eq:Vector_pert_2} for which 
\begin{equation}\label{Eq:Exp_dec}
	\| \Phi(\tau) \|_{\mc H} \leq \delta e^{-\omega\tau},
\end{equation}
for all $\tau \geq 0$.
Consequently, by denoting $(\tilde{\psi}_1(\tau,|\cdot|),\tilde{\psi}_2(\tau,|\cdot|)) = \Psi(\tau) = \Psi_0 + \Phi(\tau)$, according to Proposition \ref{Prop:Upgrade_to_class},  we get that
\begin{equation*}
	A(t,x):= \tilde{\psi}_1\left( \ln \left(\frac{T}{T-t}\right) , \frac{x}{T-t} \right)\sigma \left( \frac{x}{T-t} \right)
\end{equation*}
belongs to $C^\infty([0,T) \times \R^d)$ and solves the system \eqref{Eq:YM_general} on $[0,T) \times \R^d$ classically. Furthermore, denote $(\tilde{\varphi}_1(\tau,|\cdot|),\tilde{\varphi}_2(\tau,|\cdot|))=\Phi(\tau)$. Then, we have the decomposition
		\begin{equation*}
	A(t,x)= \frac{1}{T-t} \left( \Phi\left(\frac{x}{T-t}\right) + \varphi\left(t,\frac{x}{T-t}\right)\right),
\end{equation*}
where 
\begin{equation}\label{Eq:Equiv_phi}
	\varphi(t,x)= \tilde{\varphi}_1(\ln T - \ln(T-t),|x|)\sigma(x),
\end{equation}
and $\Phi$ is the equivariant 1-form from \eqref{Def:BB_sol_vector} (and not the solution \eqref{Eq:Exp_dec}).
Now, from \eqref{Eq:Equiv_phi}, \eqref{Eq:Exp_dec}, and \eqref{Eq:Equiv_norms} we get that
\begin{equation}\label{Eq:Est1}
	\norm{\varphi(t,\cdot)}_{\dot{H}^{\frac{d-1}{2}} \cap \dot{H}^{\frac{d}{2}} (\R^d)} \lesssim (T-t)^\omega,
\end{equation}
for all $t \in [0,T)$. Similarly, for the time derivative component
\begin{align*}
	\partial_tA(t,x)=\frac{1}{(T-t)^2}&(\Phi + \Lambda \Phi)\left(\frac{x}{T-t}\right) \\ &+ \frac{1}{T-t}\partial_0\varphi\left(t,\frac{x}{T-t}\right)+ \frac{1}{(T-t)^2}(1+\Lambda) \varphi\left(t,\frac{x}{T-t}\right) ,
\end{align*}
we get that
\begin{equation}\label{Eq:Est2}
	\norm{(T-t)\partial_t\varphi(t,\cdot)+(1+\Lambda) \varphi(t,\cdot)}_{\dot{H}^{\frac{d-3}{2}} \cap \dot{H}^{\frac{d}{2}-1} (\R^d)} \lesssim (T-t)^\omega.
\end{equation}
Equation \eqref{Eq:varphi_small} then follows from \eqref{Eq:Est1} and \eqref{Eq:Est2}. The final statement of the theorem follows from \eqref{Eq:Est1} and the $L^\infty$-embedding for critical corotational maps, which we prove in the appendix; see Lemma \ref{Lem:Embedding_inf}.

\appendix

\section{{$L^\infty$}-embedding of equivariant maps in the borderline case $s=\frac d2$}\label{Sec:L^inf_embedding}

\noindent We call a map $U : \R^d \rightarrow \R^d$ \emph{corotational} if there exists $u: [0,\infty) \rightarrow \R$ such that $U(x)= u(|x|) x$ for a.e.~$x\in \R^d$. Also, we call $u$ the \emph{radial profile} of $U$. Now, let us recall the critical Sobolev embedding, i.e., that
\begin{equation*}
	\| f \|_{L^{p}(\R^d)} \lesssim \| f \|_{\dot{H}^s(\R^d)}
\end{equation*}
for all $f \in C_c^\infty(\R^d)$, whenever $0 \leq s < \frac{d}{2}$ and $p= \frac{2d}{d-2s}$. At the endpoint case, $(s,p) = (\frac d2,\infty)$, this estimate breaks down, even for radial functions. As it turns out, corotational maps do obey this borderline embedding.

\begin{lemma} \label{Lem:Embedding_inf}
	We have that
	\begin{equation*}
		\|  U  \|_{L^\infty(\mathbb{R}^d)} \lesssim \|  U \|_{\dot{H}^{\frac{d}{2}}(\mathbb{R}^d)}
	\end{equation*}
	for all smooth and compactly supported corotational maps $U$ on $\R^d$.
\end{lemma}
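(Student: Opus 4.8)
The plan is to transfer the estimate to one dimension higher, where the exponent $\tfrac d2$ becomes strictly subcritical and the radial Sobolev embedding \eqref{Eq:Sobolev_inf} applies. First I would reduce the left-hand side to the scalar profile: since $U(x)=u(|x|)x$ we have $|U(x)|^2=|u(|x|)|^2\sum_{i=1}^d(x^i)^2=|u(|x|)|^2|x|^2$, so $\|U\|_{L^\infty(\R^d)}=\sup_{r>0}r\,|u(r)|$. Moreover, after modifying $u$ on a null set, $u(|\cdot|)$ is smooth and compactly supported on $\R^d$, hence also on $\R^{d+2}$: indeed $x\mapsto x\cdot U(x)=|x|^2u(|x|)$ is a smooth radial function on $\R^d$ vanishing at the origin, so it equals $H(|x|^2)$ for some smooth $H$ with $H(0)=0$, whence $u(r)=H(r^2)/r^2=\int_0^1H'(tr^2)\,dt$ is a smooth function of $r^2$. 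Regarding $u(|\cdot|)$ as a radial function on $\R^{d+2}$ we then have $\|U\|_{L^\infty(\R^d)}=\big\||\cdot|\,u(|\cdot|)\big\|_{L^\infty(\R^{d+2})}$.

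Next I would establish the dimension-shift identity $\|U\|_{\dot H^{d/2}(\R^d)}\simeq\|u(|\cdot|)\|_{\dot H^{d/2}(\R^{d+2})}$. Writing $W$ for the radial profile of $\mc F[u(|\cdot|)]$ — a Schwartz function, since $u(|\cdot|)\in C^\infty_c(\R^d)$ — we have $\mc F[U^i](\xi)=i\,\partial_{\xi_i}\mc F[u(|\cdot|)](\xi)=iW'(|\xi|)\,\xi_i/|\xi|$, hence $\sum_{i=1}^d|\mc F[U^i](\xi)|^2=|W'(|\xi|)|^2$ and, in polar coordinates, $\|U\|_{\dot H^{d/2}(\R^d)}^2=\int_{\R^d}|\xi|^d|W'(|\xi|)|^2\,d\xi\simeq\int_0^\infty\rho^{2d-1}|W'(\rho)|^2\,d\rho$. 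On the other hand, the $(d+2)$-dimensional Fourier transform of $u(|\cdot|)$ is radial with profile a constant multiple of $\rho\mapsto\rho^{-1}W'(\rho)$ (the standard dimension-raising identity for radial Fourier transforms), and a second polar-coordinate computation gives $\|u(|\cdot|)\|_{\dot H^{d/2}(\R^{d+2})}^2\simeq\int_0^\infty\rho^{2d-1}|W'(\rho)|^2\,d\rho$ as well. One could alternatively deduce this equivalence from the argument behind \cite[Proposition A.5]{Glo22b}, since $U$ and the equivariant $1$-form $u(|\cdot|)\sigma$ are both the radial profile $u(|\cdot|)$ times degree-one harmonic polynomials.

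Finally, I would apply \eqref{Eq:Sobolev_inf} in dimension $n:=d+2$, with $\alpha=0$ and $s:=\tfrac d2=\tfrac n2-1$; the admissibility condition $\tfrac12<s<\tfrac n2$ holds because $n=d+2\ge 4$, so $\big\||\cdot|\,u(|\cdot|)\big\|_{L^\infty(\R^{d+2})}\lesssim\|u(|\cdot|)\|_{\dot H^{d/2}(\R^{d+2})}$, and combining the above estimates yields $\|U\|_{L^\infty(\R^d)}\lesssim\|U\|_{\dot H^{d/2}(\R^d)}$. I expect the only delicate point to be the dimension-shift identity: one must check that the Fourier-side manipulations are legitimate (which they are, as $W$ is Schwartz and even), and, more substantively, observe that passing from $\R^d$ to $\R^{d+2}$ turns the \emph{critical} exponent $\tfrac d2$ into the \emph{subcritical} exponent $\tfrac{d+2}{2}-1$. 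This dimensional gain is the whole mechanism of the proof: it bypasses the failure of the endpoint embedding $\dot H^{d/2}(\R^d)\hookrightarrow L^\infty(\R^d)$ — which fails even for radial functions — by exploiting the extra vanishing at the origin that the corotational ansatz forces on $U$.
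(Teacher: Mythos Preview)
Your proof is correct and follows essentially the same approach as the paper: reduce $\|U\|_{L^\infty(\R^d)}$ to $\||\cdot|\,u(|\cdot|)\|_{L^\infty(\R^{d+2})}$, apply the weighted radial embedding \eqref{Eq:Sobolev_inf} with $n=d+2$ and $s=\tfrac d2$, and use the norm equivalence $\|U\|_{\dot H^{d/2}(\R^d)}\simeq\|u(|\cdot|)\|_{\dot H^{d/2}(\R^{d+2})}$. The only difference is that you supply a direct Fourier-side computation for the norm equivalence (via the dimension-raising identity for radial Fourier transforms), whereas the paper simply invokes \cite[Proposition A.5]{Glo22b}; you also explicitly justify that $u(|\cdot|)$ extends smoothly to $\R^{d+2}$, a point the paper leaves implicit.
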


\begin{proof}
	To establish this lemma, we use the equivalence between Sobolev norms of corotational maps and those of their radial profiles, together with a Sobolev embedding of radial functions into weighted $L^\infty$ spaces; see \cite[Proposition A.5]{Glo22b} and \cite[Proposition B.1]{Glo22a}. By means of the aforementioned two results, we have a one-line proof
	\begin{align*}
		\| U  \|_{L^\infty(\mathbb{R}^d)} \lesssim  \| |\cdot| u(|\cdot|)  \|_{L^\infty(\mathbb{R}^{d+2})} \lesssim \| u(|\cdot|) \|_{\dot{H}^{\frac{d}{2}}(\mathbb{R}^{d+2})}  \simeq \|U \|_{\dot{H}^{\frac{d}{2}}(\mathbb{R}^{d})}.
	\end{align*}
\end{proof}
\begin{remark}
	By using the extension and restriction operators, the same estimate can be proven also for corotational maps on balls $\mathbb{B}_R^d$.
\end{remark}

\bibliography{refs_YM_global_higher_dim}
\bibliographystyle{plain}

\end{document}